\documentclass[12pt]{amsart}
  \setlength{\textheight}{22cm}
\setlength{\textwidth}{16cm}
\setlength{\oddsidemargin}{0cm}
\setlength{\evensidemargin}{0cm}
 \usepackage{amsmath}
\usepackage{amssymb} 
\usepackage{amsfonts}
     \usepackage{xcolor}
    \makeatletter
     \def\section{\@startsection{section}{1}%
     \z@{.7\linespacing\@plus\linespacing}{.5\linespacing}%
     {\bfseries \normalfont\scshape
     \centering
     }}
     \def\@secnumfont{\bfseries}
     \makeatother

   \newtheorem{theorem}{Theorem}[section]
\newtheorem{lemma}[theorem]{Lemma}
\newtheorem{corollary}[theorem]{Corollary}
\newtheorem{proposition}[theorem]{Proposition}

\theoremstyle{definition}
\newtheorem{definition}[theorem]{Definition}

\theoremstyle{remark}
\newtheorem{remark}[theorem]{\bf Remark}
\newtheorem{remarks}[theorem]{\bf Remarks}

\numberwithin{equation}{section}

\def\beq{\begin{equation}}
\def\eeq{\end{equation}}
  
\def\ben{\begin{eqnarray}}
\def\een{\end{eqnarray}}

\def \a{{\alpha}}
\def \b{{\beta}}
\def \D{{\Delta}}
 \def \d{{\delta}}
\def \e{{\varepsilon}}
 \def \g{{\gamma}}

\def \k{{\kappa}}
\def \l{{\lambda}}

\def \o{{\omega}}
\def \O{{\Omega}}
\def \p{{\varphi}}

 \def \m{{\mu}}
\def \s{{\sigma}}
 
  \def \A{{\mathcal A}}

\def \qq{{\qquad}}

\def \X{{\mathcal X}}

  \def \noi{{\noindent}}

  
\def\E{{\mathbb E}}

\def\P{{\mathbb P}}

\def\R{{\mathbb R}}
\def \dd{{\rm d}}
 
\def\Z{{\mathbb Z}}
 
\def\Q{{\mathbb Q}}

\def\N{{\mathbb N}}


at  6 pt
\scrollmode

\hfuzz =0pt 
  
  \font\ggum= cmbx10 at 10  pt
 at 9,5 pt  \scrollmode

\font\sevenrm= cmr10 at 7,3 pt
 at 9,3 pt
\font\tenrm= cmr10 at 12,5 pt

 

\scrollmode

\def\ddate {\sevenrm \ifcase\month\or January\or
February\or March\or April\or May\or June\or July\or
August\or September\or October\or November\or December\fi\! {\the\day}, \!{\sevenrm\the\year}}
  \begin{document}

 \title[{\sevenrm moderate deviations  of   suprema of Gaussian  polynomials}]{{\ggum 
 Moderate deviations  of  suprema of  
   Gaussian  processes \\ An approximation criterion
   }
   }
    \title[{\sevenrm moderate deviations  of   suprema of Gaussian  polynomials}]{{\ggum 
  Moderate deviations  of  suprema of  
    Gaussian  processes \\ A cyclic approximation criterion
   }
   }
   \author{Michel J.\,G. Weber}  \address{Michel Weber: IRMA, UMR 7501, 10  rue du G\'en\'eral Zimmer, 67084
 Strasbourg Cedex, France}
  \email{michel.weber@math.unistra.fr}

\keywords{moderate deviations, parametrized families,  trigonometric, almost periodic Gaussian polynomials, decoupling inequalities,  Gaussian vectors, Gaussian processes, Kronecker's theorem.
      }
      \begin{abstract}  
We study  moderate deviations  of     suprema of     parametrized sequences of  sample bounded Gaussian processes $\{X _x(t), t\in T _x\}$, and first present recent sharp bounds in simple cases. 
   Let $\{L_k, k\ge 1\}$ be any increasing unbounded sequence of  positive reals and $(a_k)_{k\ge 1}$ a sequence of real numbers such that $A_x = \sum_{k\le x} a_k^2\uparrow \infty$  with $x$,  and let     \begin{eqnarray*} X_{y,x}(u)  =  \sum_{y\le k\le x} a_k \big( g_k \cos L_ku+ g'_k\sin L_k u\big),\qq   x>y\ge 1,\quad u\in\R, \end{eqnarray*}  
where   $(g_k)_{k\ge 1}$, $(g'_k)_{k\ge 1}$ are two independent sequences of i.i.d. $\mathcal N(0,1)$ distributed random variables. We first study the periodic case 
    $L_k\equiv k$. Assume   that   $A(x)\sim \log\log x$, $x\to \infty$ and $B=\sum_{k\ge 1} a_k^4<\infty$. Let $0<\eta< 1$.  We prove for instance that
 there exists   an absolute constant  $C$  such that for all $x$ large enough,
\begin{align*}
  \P\Big\{ \sup_{0\le t\le 1}
  X_{1,x}(t) \le\sqrt{2\eta (\log\log x)(\log \log\log x)}\Big\}
 \ \le  \ e^{-\,\frac{C (\log\log x)^{1-\eta}}{  \sqrt{8\eta (B+1)(\log \log\log x)}}}.
 \end{align*} 
In the almost periodic case,   
we prove an approximation theorem. We introduce a   modulable diophantine approximation.   Let $N_k\ge k, \  k\ge 1$ be a non-decreasing unbounded test   sequence of  positive  integers, and let 
$\ell(k)=\ell(N_k,k)=\frac{1}{N_k}\big\lfloor N_kL_k\big\rfloor$, so that 
 $  \big|\ell( k) -L_k\big|\le \frac{1}{N_k}$,  $k\ge 1$.
  Put  for any interval $I$,
 $ \k (I)=  \#\{\k:  [N_{\k-1}, N_\k[\subset I\}$.
We   prove an   approximation theorem  by Gaussian polynomials with $\Q$-frequencies, and exponentially decaying error term:  for any reals   $\Theta_{y,x}>0$, $1\le y\le x$, $U\ge 1$, $0<h<  \Theta_{y,x}$,
\begin{equation*}  \P\Big\{   \sup_{1\le u\le U} X_{y,x}(u)\le \Theta_{y,x} -h  \Big\}   \,  \le \, 
\P\Big\{ \sup_{1\le u\le U} X^\perp_{y,x}(u) \le \Theta_{y,x}   \Big\} +2\, \exp\Big\{\frac{- C\,h^2 }{     \D^2   \log \k([1,U]) } \Big\},
  \end{equation*}
where
$X^\perp_{ y,x}(u) = \sum_{   y\le  k\le x} a_k  \big( g_k \cos ( \ell( k)    u) \big) +  g'_k\sin  ( \ell( k)   u)  \big)$, $u\in\R$,
and 
$$\D\,=\,
\sqrt{   \sum_{   y\le k\le x} \frac{1 }{N^2_k  } }   \sqrt{   \sum_{   y\le k\le x}a_k^2 } +
             \sum_{ y\le  k<  \k \atop N_\k \le U}   |a_k|
   +    \sup_{ y\le N_\k\le U }\ N_\k 
    \sqrt{ \sum_{   \k\le k\le x} \frac{  1}{N_k^{2 } }       }\, 
    \sqrt{    \sum_{   \k\le k\le x}   |a_k|^2    },$$  
 if $1\le y\le U$, and $\D\,=\,U \sqrt{ \sum_{   y\le k\le x}\frac{1 }{N^2_k  } }\sqrt{ \sum_{   y\le k\le x}a_k^2\ }$, if $1\le U\le y$.

 Finally we study for general non-vanishing coefficient sequences, the behavior along lattices of almost periodic Gaussian 
polynomials with linearly independent frequencies, and  use a lattice localized version of Kronecker's theorem.
\end{abstract}
\maketitle



  \section{Introduction-Results}\label{s1}
 We study  moderate deviations  of     suprema of  sample bounded  $x$-parametrized Gaussian processes $\{X _x(t), t\in T _x\}$, namely    bounds of type       
 \beq  \label{db.type} \P\big\{ \sup_{t\in T _x}
 X _x(t) \le \Theta _x \big\}\le e^{-\Upsilon _x} ,\qq \qquad \Theta _x, \Upsilon _x\to +\infty,\quad x\to  +\infty.\eeq

   Let $$
\Phi(x)  ={1\over \sqrt{2\pi}} \int_\infty^x e^{-t^2/2} \dd t, \qq \Psi(x) = 1-\Phi(x).
 $$
 
    In the simplest case when $X= \{X_1, \ldots, X_n\}$, $n\ge 2$,  is a centered Gaussian vector, such that  for some  $0<\l <1$,  \beq\label{assumption}\hbox{$\E X^2_u=1 $, \qq  $\E X_uX_v\le \l$ , \qq   $u\neq v$,}
\eeq
 a sharp deviation bound  with Gumbel's type version holds.  
  \begin{theorem}[Weber \cite{W1a},\,Th.\,1.1]
\label{Ac1c} 
{ \rm (i)} For   any positive real   $\Theta$,  and any $n\ge 2$,
  \begin{eqnarray}\P\Big\{ \sup_{i=1}^n X_i\le \Theta\Big\}  &\le &\Big(1+\frac{  \l n  }{   1-\l }\Big)^{(n-1)/2}  \ \Phi \Big(\frac{\Theta}{\sqrt{ 1+\l(n-1) }}\Big)^n   .
 \end{eqnarray}
 {\rm (ii)}   Set
$b_n= \big( \log \frac{n^2}{4\pi \log n} \big)^{1/2}$.  
  Given any positive real $\e$, for $n$ sufficiently large, $n\ge n(\e)$,  and all $x$ such that  $x\ge -b_n^2$,
\begin{eqnarray*}\P\Big\{ \sup_{i=1}^n X_i\le \Big({\frac{1}{b_n}x+ b_n}\Big) {\sqrt{ 1+\l(n-1)}} \Big\}     \,\le \, \Big(1+\frac{  \l n  }{   1-\l }\Big)^{(n-1)/2}  e^{- e^{-x} (1-\e)   } .
 \end{eqnarray*}  \end{theorem} 
  We refer to     Berman \cite[Ch.\,9]{Be}, concerning Gumbel law, extreme value theory, namely  convergence in law results for sequences of partial maxima, usually obtained by applying derivation   theorems.

\vskip 3 pt Let assumption \eqref{assumption} be slightly strengthened as follows: for some real $0<\l<1$,
  \beq\label{assumption1}\hbox{$\E X^2_u=1 $, \qq  $|\E X_uX_v|\le \l$, \qq   $u\neq v$.}
\eeq

\begin{theorem}\label{ass1th} Let $0<\eta<1$ be fixed. There exist positive constants $C_\eta, C'_\eta$ depending on $\eta$ only, such that for any centered Gaussian vector $X=\{X_1,\ldots, X_n\}$, $n\ge 3$, satisfying  assumption \eqref{assumption1}   for some positive real $ \l  $ such that  $ \l\le \eta/2n  $,  we have 
\begin{eqnarray*}
 \P\Big\{ \sup_{i=1}^n X_i\le \sqrt{    2  \log n -2 \log\log n   -   \eta  (\log n)/      n }\Big\}
 &\le & C_\eta\, e^{- C'_\eta \sqrt{ \log n  }  }.
\end{eqnarray*}
\end{theorem}
\vskip 3 pt 
Let further be given reals, $0<\l\le u<1$.  Theorem \ref{Ac1c} is  generalized in  \cite{W1a} by considering       
centered Gaussian processes $\{X(t), t\in T\}$ with unit variance  satisfying  the following property: 
\vskip 3 pt {\it There exists a finite partition $\{T_j,j\in J\}$ of $T$, with $\#\{T_j\}=k$ for each $j$, $\#\{T \}=Nk$, such that}
\ben   \begin{cases} \E X(s)X(t)  \le u  \qquad \forall {s\neq t\in T_j}, \ \forall j\in J, \cr&\cr
\E X(s)X(t) \le  \l  \qquad  \forall {s \in T_i, \forall t\in T_j}, \ \forall i,j\in J, \, i\neq j.
\end{cases} 
\een

This  situation is often met in  Gaussian sample paths study. 
\begin{theorem}\label{gen.case.th} {\rm (i)}  For   any positive real   $\Theta$, 
\begin{eqnarray*}\P\Big\{ \sup_{t\in T}  X(t)\le \Theta\Big\}&\le  &\int_{\sup_{i=1}^{Nk} x_i\le \Theta}  \frac{e^{-  \frac{ 1}{2 }  Q(\l,\m)(\underline x)   }    }{(2\pi)^{Nk/2}[ (  1- u)^{(k-1)N}\big(  1+u( k-1 )\big))^N]^{1/2}] }\,\dd \underline x  ,
 \end{eqnarray*}
where for any $\underline x\in \R^{Nk}$,
\begin{eqnarray*}Q(\l,\m)(\underline x)  &=& - \frac{   \l  }{(1-u+ k(u-\l))\,( (1-u)+Nku +(N-1)k\l )}\Big(\sum_{ v=1}^{Nk} x_{v}\Big)^2  
\cr & &\   -\frac{ (u-\l)  }{(1-u)(1-u+ k(u-\l))}\sum_{j=0}^{N-1}\Big(\sum_{ l=1 }^k x_{jk+ l}\Big)^2 +\frac{1}{1-u}\Big(\sum_{ v=1}^{Nk} x^2_{v}\Big).
 \end{eqnarray*}

{\rm (ii)} Let 
  $$\b(\l,u) =\Big(  \frac{1}{ 1-u+ k(u-\l) }  \Big) \Big(  \frac{  1-u+Nku-k\l }{    1-u +Nk(u +\l)-k\l  }  \Big). $$
 Assume that $Nu>\l$ and $(k-1)u>1$. Then $0<\b(\l,u)<1$, $\b(\l,u)\asymp\frac{1}{ 1-u+ k(u-\l) }  \frac{   ( N u- \l )}{     (N  u   - \l ) +N\l}$. For each $\underline x\in \R^{Nk}$,   any positive real   $\Theta$, 
\begin{eqnarray*}\P\Big\{ \sup_{t\in T}  X(t)\le \Theta\Big\} &\le &\int_{\sup_{i=1}^n x_i\le \Theta}  \frac{e^{-  \frac{ \b(\l,u)}{2 }    \sum_{   1\le v\le Nk}x_{v}^2  }    }{(2\pi)^{n/2}[ (  1- u)^{(k-1)N}\big(  1+u( k-1 )\big))^N]^{1/2}}\,\dd x
  .
 \end{eqnarray*}
 \end{theorem}\vskip 3 pt  
 Recall Slepian comparison lemma.
   \begin{lemma}[\cite{S}, Lemma\,1,\,p.\,468] Let $Y=\{Y_1,\ldots, Y_n\}$, $X=\{X_1,\ldots, X_n\}$, be two centered Gaussian vectors. Assume that
\begin{equation}
\begin{cases} \E Y_i^2=\E X_i^2 &\qquad 1\le i\le n, \cr
\E Y_iY_j\le\E X_iX_j &\qquad 1\le i,j\le n.
\end{cases}
\end{equation}
Then for any positive real number $x$,
\begin{equation}
 \P\Big\{\sup_{i=1}^n Y_i<x\Big\}\,\le \,\P\Big\{\sup_{i=1}^n X_i<x\Big\}.
\end{equation}
\end{lemma} 
In view of Slepian   lemma, the proof 
 amounts to study matrices of type below.  Let    $R=R(\l)$ be the $k\times k$ matrix whose entries all equal to $\l$, and $C=C(u) $ be the $k\times k$ matrix whose entries off the diagonal are all equal to $u$, and to 1 otherwise. Define the $Nk\times Nk$ matrix composed of $N$ diagonal blocks  $C $ and of  blocks $R $ elsewhere,
 \begin{eqnarray*}& &C(\l,u)\,=\,{ \left[\begin{matrix}
  C  &{}&{}&{}&{}\cr 
   {} &C  &{} &R &{}\cr
  {}&{}  &\ddots &{}&{}
    \cr
 {}       &R  &{}&\ddots& {}
  \cr 
 {}&{}&{}&{} &C \end{matrix} \right]}
  \end{eqnarray*}  
Both Theorems \ref{ass1th}, \ref{gen.case.th} are proved in \cite{W1a} as well. 
\vskip 15 pt  Otherwise the following typical optimal result    illustrates well our purpose.
\begin{theorem}[\cite{W},\,Th.\,4.1] \label{szego} 
Let  
$\{X_j, j\in \N\}$ be a Gaussian stationary   sequence with spectral  function $F $.  Assume
$F$ is of infinite type (its range  consists of a infinite number of values). Let 
$f$ be the  Radon-Nykodim derivative  of the absolutely continuous part of $F$. Then for   all
$n
$ and
$z>0$,
$$ \Big( 
\int_{-z }^{z } e^{-x^2/2}\frac{\dd x}{\sqrt{2\pi}} \Big)^n\le   \P\Big\{ \sup_{j=1}^n |X_j|\le z \Big\}\le   \Big( 
\int_{-z/\sqrt{G(f)}}^{z/\sqrt{G(f)}} e^{-x^2/2}\frac{\dd x}{\sqrt{2\pi}} \Big)^n
 ,
$$
where
\begin{equation*}  G(\o)  =\begin{cases}\exp\Big\{ \frac1{2\pi}\int_{- \pi}^{ \pi}
\log \o(t) \dd t\Big\} &\quad {\rm if}\  \log \o(t) \ {\rm is\  integrable}\cr 
0 &\quad {\rm otherwise}.
\end{cases}
\end{equation*}
\end{theorem}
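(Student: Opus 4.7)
The plan is to treat the two inequalities separately, using only classical properties of centered Gaussian vectors together with Szeg\H{o}'s theorem on the geometric mean. For the lower bound I would invoke the Khatri--\v{S}id\'ak correlation inequality: for any centered Gaussian vector $(X_1,\ldots,X_n)$ with unit variances (which holds here under stationarity together with the standard normalization $\int \dd F=1$) and any positive $z_1,\ldots,z_n$,
\[
\P\big\{|X_1|\le z_1,\ldots,|X_n|\le z_n\big\}\ \ge\ \prod_{j=1}^n\P\big\{|X_j|\le z_j\big\}.
\]
Specializing $z_j\equiv z$ and using that each $X_j\sim \mathcal N(0,1)$ gives the left-hand inequality.

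The upper bound is where the work lies, and my plan is to iterate conditioning on the past. Let $\s_n^2=\mathrm{Var}(X_n\mid X_1,\ldots,X_{n-1})$ denote the one-step prediction error. The conditional law of $X_n$ given the past is Gaussian with some mean $\widehat X_n$ (the best linear predictor) and variance $\s_n^2$, so Anderson's inequality applied to the symmetric log-concave Gaussian density — a symmetric interval dominates any translate of itself — will yield
\[
\P\big\{|X_n|\le z\,\big|\,X_1,\ldots,X_{n-1}\big\}\ \le\ \int_{-z/\s_n}^{z/\s_n} e^{-y^2/2}\,\frac{\dd y}{\sqrt{2\pi}}.
\]
At this point I would appeal to the Kolmogorov--Szeg\H{o} theorem: under the infinite-type hypothesis on $F$ the prediction errors form a strictly positive, monotonically decreasing sequence whose limit is $G(f)$, so $\s_n^2\ge G(f)$ uniformly in $n$. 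Telescoping $\P\{|X_j|\le z,\ 1\le j\le n\}$ by the tower property and substituting this uniform conditional bound at each step produces the right-hand inequality.

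The main obstacle is the comparison $\s_n^2\ge G(f)$, which is the only ingredient that is not purely elementary Gaussian analysis; everything else — Khatri--\v{S}id\'ak, Anderson, and the tower property — is off-the-shelf. The role of the infinite-type assumption is precisely to guarantee that $\s_n^2>0$ for every $n$, so the conditional distributions remain non-degenerate and the iterated conditioning is well-defined. The edge case where $\log f$ fails to be integrable fits the same scheme: there $G(f)=0$ by convention and the upper bound collapses to the trivial probability $1$, consistent with the fact that Szeg\H{o}'s theorem then provides no non-trivial information.
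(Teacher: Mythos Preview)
Your proposal is correct and matches the paper's own sketch: the lower bound is Khatri--\v Sid\'ak, and the upper bound rests on the Szeg\H{o}/Toeplitz fact that the finite-past prediction variances $\s_n^2=D_n/D_{n-1}$ decrease to $G(f)$ --- precisely the ``monotonicity property of the solutions of an extremal problem'' the paper cites from Grenander--Szeg\H{o}. Your conditioning-plus-Anderson packaging makes explicit the step that neutralizes the conditional mean, which the paper's one-line sketch leaves implicit.
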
   
 The second inequality uses determinantal properties of Toeplitz forms (Grenander  and Szeg\"o \cite{GS}, \S1.11, \S 2.2), and   monotonicity property of the solutions of an extremal problem in \cite[p.\,44]{GS},  the first inequality is  Khatri-\v Sid\'ak's.
\vskip 3 pt 
We study \eqref{db.type} for  parametrized families of trigonometric  or almost periodic 
  Gaussian  polynomials.
  Throughout   $(g_k)_{k\ge 1}$, $(g'_k)_{k\ge 1}$ (resp.  $(\e_k)_{k\ge 1}$) are two independent sequences of i.i.d., $\mathcal N(0,1)$   distributed   random variables (resp. Rademacher random variables, $\P\{ \e_n =\pm 1\} =1/2$)  defined in a common probability space $(\O,\A,\P)$.  The letter $\X$ (resp.\,$X$)  denotes a centered stationary Gaussian process, but also a parametrized family of centered stationary Gaussian processes  (resp. a centered Gaussian vector).
\subsection{The trigonometrical case.} Consider  for $x\ge y\ge 1$ the families of parametrized Gaussian trigonometrical polynomials 
  \begin{eqnarray}\label{gdsc.yx}\widetilde X_{y,x}(t) &=& \sum_{y\le k\le x} a_k \,\big( g_k \cos 2\pi j_k   \,t+  g'_k\sin 2\pi  j_k  \, t\, \big), \quad 0\le t\le 1,
 \end{eqnarray}
 where $j_k$, $k\ge 1$ are increasing integers.
  Let
 \begin{equation}\label{A.yx}A(y,x)=\sum_{y\le k\le x}a^2_k, \qq 1\le y\le  x.
 \end{equation}
   We study for which  $\Theta_{y,x}, \Upsilon_{y,x} $ tending to infinity with $x$, a bound of type   below can be established and be valid for all $x$ large enough,   $$   \P\Big\{ \sup_{0\le t\le 1}
\widetilde X_{y,x}(t) \le \Theta_{y,x} \Big\}\le e^{-\Upsilon_{y,x}}.$$
We prove  \begin{theorem}  \label{semi.asymp.bound.cor} {\rm (1)  ($0<\eta< 1$)}  Assume that for $y, x$ with $2\le y\le x$,   the following condition is satisfied,
\begin{equation}\label{A1.a.yx}\Big(\sum_{y\le k\le x} a_k^4\Big)^{1/2}\le \frac{A(y,x)^{1-\eta}}{\sqrt{\log  A(y,x)}} .
\end{equation}
   Then for  $ 
   0<\e \le  1$,
\begin{align}\label{semi.asymp.bound.res}
 \P\Big\{ \sup_{0\le t\le \e}
\widetilde X_{y,x}(t) \le \sqrt{2\eta A(y,x)\log A(y,x)}\Big\}
\ \le  \ 
e^{-\, \frac{C  \e\,  A(y,x)^{1-\eta}}{  \sqrt{ \eta    ((\sum_{y\le k\le x} a_k^4)^{1/2}+1  )\log A(y,x)}}},
\end{align}
where $C$ is an absolute constant. 
 \vskip 2 pt \indent
 If $B=\sum_{k\ge 1} a_k^4<\infty$, then condition \eqref{A1.a.yx} is satisfied and
\begin{align*}
 \P\Big\{ \sup_{0\le t\le \e}
 \widetilde X_{y,x}(t) \le\sqrt{2\eta A(y,x)\log A(y,x)}\Big\}
\ \le  \ e^{-\,\frac{C \e\,  A^{1-\eta}(y,x) }{  \sqrt{ \eta (B+1)\log A(y,x)}}}.
\end{align*}
{\rm (2)   ($\eta=1$)}. Let $0<V(y,x)<A(y,x)$. Then we have
$$\P\Big\{ \sup_{0\le t\le \e}\widetilde X_{y,x}(t)\, \le \,  \sqrt{2A(y,x)   \log \Big( \frac{A(y,x)} {V(y,x)}\Big) }\Big\}\,\le \,    e^{-\,\frac{C \e\,V(y,x)}{  \sqrt{   ((\sum_{y\le k\le x} a_k^4)^{1/2}+1 ) \log ( {A(y,x)}/{V(y,x)})}}}.$$ \end{theorem}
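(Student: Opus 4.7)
The plan is to reduce the continuous supremum on $[0,\e]$ to a discrete maximum over a well-chosen finite grid of cardinality $N$, invoke the Gaussian-vector decoupling inequalities from~\cite{W3} to replace the correlated vector $(\widetilde X_{y,x}(t))_{t\in\mathcal D}$ by an independent Gaussian family, and finally exponentiate the resulting product of sharp Gaussian-tail probabilities. Both parts of the theorem fit the same scheme; only the Gaussian-tail estimate (through the target level $\Theta$) changes.

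First I would discretize: fix a suitable absolute constant $c_0$ and set
$$N = \Bigl\lfloor \frac{c_0\,\e\, A(y,x)}{\sqrt{(\sum_{y\le k\le x}a_k^4)^{1/2}+1}}\Bigr\rfloor,\qquad \mathcal D=\{j\e/N:\, 0\le j< N\}\subset[0,\e].$$
Monotonicity of the supremum then gives
$$\P\Bigl\{\sup_{0\le t\le \e}\widetilde X_{y,x}(t)\le \Theta\Bigr\}\le\P\Bigl\{\max_{t\in\mathcal D}\widetilde X_{y,x}(t)\le \Theta\Bigr\}.$$
The grid parameter $N$ is chosen to be as large as possible while the off-diagonal entries $\sum_k a_k^2\cos(2\pi j_k(t_i-t_j))$ of the covariance matrix of $(\widetilde X_{y,x}(t))_{t\in\mathcal D}$ can still be controlled, by Cauchy--Schwarz, by $(\sum a_k^4)^{1/2}$; this is where hypothesis~\eqref{A1.a.yx} (or, in the immediate corollary, the summability $B<\infty$) enters.

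Second, I apply the Gaussian-vector decoupling inequality of \cite{W3} to $(\widetilde X_{y,x}(t_j))_{j=0}^{N-1}$, whose diagonal covariances are all equal to $A(y,x)$ and whose off-diagonal part is small by the previous step. This yields an independent centered Gaussian family $(Z_j)$ of variance essentially $A(y,x)$ such that
$$\P\Bigl\{\max_{t\in\mathcal D}\widetilde X_{y,x}(t)\le \Theta\Bigr\}\le \prod_{j=0}^{N-1}\P\{Z_j\le \Theta\}= \Phi\bigl(\Theta/\sqrt{A(y,x)}\bigr)^{N}.$$
For part~(1), with $u=\sqrt{2\eta\log A(y,x)}$ and $\Theta=u\sqrt{A(y,x)}$, Mills' ratio gives $1-\Phi(u)\ge c\,A(y,x)^{-\eta}/\sqrt{\log A(y,x)}$ for $A(y,x)$ large, hence
$$\Phi(u)^{N}\le \exp\Bigl(-\,\frac{cN\,A(y,x)^{-\eta}}{\sqrt{\log A(y,x)}}\Bigr);$$
inserting the chosen $N$ yields exactly the claimed bound. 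The case $B<\infty$ follows at once, since then $(\sum a_k^4)^{1/2}\le\sqrt B$ and \eqref{A1.a.yx} is automatic for $A(y,x)$ large. For part~(2) ($\eta=1$), with $u=\sqrt{2\log(A(y,x)/V(y,x))}$ Mills' ratio yields $1-\Phi(u)\ge c\,(V(y,x)/A(y,x))/\sqrt{\log(A(y,x)/V(y,x))}$, and the same computation produces the stated bound.

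The main obstacle is step two: verifying the quantitative hypothesis of the decoupling inequality from \cite{W3} in such a way that both the decoupled variances stay close to $A(y,x)$ and the number $N$ of retained samples can still be pushed to the order $\e A(y,x)/\sqrt{(\sum a_k^4)^{1/2}+1}$. Balancing these two competing requirements is what fixes the absolute constant $c_0$ above and is the only non-routine part of the argument; the remaining Gaussian-tail bookkeeping in step three is standard.
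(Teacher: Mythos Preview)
Your overall strategy (discretize, decouple via a product inequality, apply Mills' ratio) matches the paper's, but the decoupling step has a genuine gap. You assert that the off-diagonal covariances $\sum_k a_k^2\cos(2\pi j_k(t_i-t_j))$ are ``controlled, by Cauchy--Schwarz, by $(\sum a_k^4)^{1/2}$''. Pointwise this is false: for neighbouring grid points the covariance is close to $A(y,x)$, not to $(\sum a_k^4)^{1/2}$. What \emph{is} true is an average bound: the $L^1[0,1]$-norm of $\gamma(\tau)=\sum_k a_k^2\cos(2\pi j_k\tau)$ is at most its $L^2$-norm $(\sum a_k^4)^{1/2}$ by orthogonality, but exploiting this requires a Riemann-sum over a \emph{full period}, not just over $[0,\e]$. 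Moreover the general decoupling inequality of \cite{W3} (Theorem~\ref{dec.ineq} here) that you invoke carries a factor $\det(C)^{-1/(2p)}$ which you never estimate, and its conclusion is $\prod\|f_j(X_j)\|_p$ with $p\ge p(X)$, not a straight product $\prod\P\{Z_j\le\Theta\}$; so neither ``the decoupled variances stay close to $A(y,x)$'' nor the displayed product bound follows from it as stated.

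The paper instead uses the Klein--Landau--Shucker inequality for \emph{cyclic} stationary processes (Theorem~\ref{th3.KLS}). One takes the full grid $\{j/n:j\in\Z/n\Z\}$ with $n\ge 2\pi\sum_k j_k a_k^2$, so that $m\mapsto\widetilde X_{y,x}(m/n)$ is cyclic-stationary on $\Z/n\Z$; its decoupling coefficient $p_{y,x}(n)=A(y,x)^{-1}\sum_{j=0}^{n-1}|\gamma(j/n)|$ is bounded via the Riemann-sum estimate (Proposition~\ref{p(n)a}), yielding $n/p_{y,x}(n)\ge A(y,x)/\bigl((\sum a_k^4)^{1/2}+1\bigr)$. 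The device for restricting to $[0,\e]$ is to put $f_m=\chi_{(-\infty,\Theta]}$ only for $0\le m\le z:=\lceil n\e\rceil$ and $f_m\equiv 1$ for $z<m<n$; KLS then gives
\[
\P\Bigl\{\max_{0\le m\le z}\widetilde X_{y,x}(m/n)\le\Theta\Bigr\}\le\P\{X(0)\le\Theta\}^{z/p_{y,x}(n)}\le e^{-\e\,\P\{X(0)>\Theta\}\,n/p_{y,x}(n)}.
\]
Thus your $N$ is really the exponent $z/p_{y,x}(n)\approx\e A(y,x)/\bigl((\sum a_k^4)^{1/2}+1\bigr)$, not the number of grid points. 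After this, the Mills-ratio computation is exactly as you wrote.
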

  
 \begin{remark}By Cauchy-Schwarz's inequality $\big(\sum_{y\le k\le x} a_k^4\big)^{1/2}\le \big(\sum_{y\le k\le x} a_k^2\big)$. Condition \eqref{A1.a.yx} requires a little more, and that $ A(y,x)$  be  large.   
\end{remark}
   Apply Theorem \ref{semi.asymp.bound.cor} with $y=1$ and let $A(x)=\sum_{k\le x} a_k^2$ so that assumption \eqref{A.yx} translates to 
\begin{equation*} \Big(\sum_{k\le x} a_k^4\Big)^{1/2}=o\big( \frac{A(x)^{1-\eta}}{\sqrt{\log A(x)}}\big).
\end{equation*}
As a consequence we get  
  \begin{corollary}  \label{semi.asymp.bound.a} Let $0<\eta< 1$.   Assume that $A(x)\sim \log\log x$, $x\to \infty$ and $B=\sum_{k\ge 1} a_k^4<\infty$.
 Then there exists   an absolute constant  $C$, such that for all $x$ large enough,
\begin{align*}
  \P\Big\{ \sup_{0\le t\le 1}
  X_x(t) \le\sqrt{2\eta (\log\log x)(\log \log\log x)}\Big\}
 \ \le  \ e^{-\,\frac{C (\log\log x)^{1-\eta}}{  \sqrt{8\eta (B+1)(\log \log\log x)}}}.
 \end{align*} 
 \end{corollary}

 \subsection{The almost periodic case.}
   The same question for 
  almost periodic Gaussian  polynomials 
   is of quite another 
   order, and is naturally    more   complicated,
  even in known examples, but in the same time, quite more interesting.
  Let for instance 
   $  X_x(t)=\sum_{1\le k\le x} a_k \big( \e_k  \cos L_kt+ \e'_k\sin L_k t\big)$,  the quantity
  $$\E \,\sup_{t\in  \R} X_x(t) $$
is   estimated for fixed $x$, only in very specific cases.  The optimal estimate of Hal\'asz-Queff\'elec 
$$ \E\, \sup_{t \in \R} \Big|\sum_{n=2}^N
\e_n n^{-\s - it}\Big| \approx 
{N^{1-\s}\over \log N},
$$
is extended in \cite{LW1} and  \cite{W8} to the case $X_N(t)=\sum_{n=1}^N\e_n d(n) n^{- s }$, 
where  $ d  $ is a sub-multiplicative function.  See Queffelec   \cite{Q1}, \cite{Q2}    \cite{LW}, Lifshits and Weber \cite{LW1}, Weber \cite{W7}, \cite{W8}, and  \cite{W5}. 
To our knowledge,   no corresponding deviation result  ($T= \R$ or $T= I$, $I$ some fixed interval depending on $x$) of type \eqref{db.type} is known.
 From the point of view of the   general development of the   theory of Rademacher or Gaussian processes,     it is   a quite attracting  question, but not only. 
 Consider a remarkable example of Gaussian process (here $p$ denotes a prime number)
   $$X_x(t)=\sum_{p\le x } g_p \frac{\cos (\log p)t}{p^{1/2+1/\log x}},
  $$  
   which  has close connection   with   Wintner's random M\"obius function \cite{Wi}, and was   studied by Harper \cite{H}, and later developed in  subsequent papers. Harper building on Hal\'asz \cite{Ha} proved in \cite{H},\, Corollary 2,
  \begin{eqnarray*}\label{harper}
\qq \P\Big\{ \sup_{1\le t\le 2(\log \log x)^2}\sum_{p\le x} g_p \frac{\cos (\log p)t}{p^{1/2+1/\log x}}\ \le \ \Theta (x)\Big\}&=& \mathcal O\big((\log\log\log x)^{-1/2}\big) ,\end{eqnarray*}
as $x$ tends to infinity, where 
 $\Theta(x)\sim\log\log x $.
   On this example the correlation of the normalized process 
  \begin{eqnarray*}  \E\bigg( \frac{X_x (s)}{\sqrt{\E X_x (s)^2 }}\cdot \frac{X_x  (t) }{\sqrt{\E X_x  (t)^2 }}\bigg)& =&  \frac{\sum_{p\le x} a_p^2 \cos(s\log p) \cos(t\log p) }{\sqrt{\displaystyle{\sum_{p\le x} a_p^2   \cos^2(s\log p)   }}\sqrt{\displaystyle{ \sum_{p\le x} a_p^2   \cos^2(t\log p)    }}}.
\end{eqnarray*} 
   can be estimated by the using prime number theorem.  
\vskip 2 pt 
Except for the inspiring work of Harper,  there seems to be   no other attempt in the literature. In the recent work \cite{W1a}, we   deduced from the results obtained however, similar type estimates of the supremum of this  process over general sets. 
Turning now to the  goal of this work, it can  be described   as follows. As a direct approach can only be specific,  we  are interested in  the question of knowing  whether  or not, a general criterion 
 allowing to approximate the deviation of an almost periodic Gaussian polynomial by the one of a periodic one, this in a precise sense, can be established.  
  This is justified by Theorem \ref{semi.asymp.bound.cor} and Corollary \ref{semi.asymp.bound.a}.  By investigating it   we   could eventually show,   by  introducting    a diophantine approximation device, that such an approach can   indeed be worked out, and also    be effective (the error term is sharp). This 
  shifts the initial problem  to the one of   estimating the smallest approximating periodicity   cycle, and the way it depends on the parameters of the process, which may be easier. Of course the latter problem cannot be solved  generally.
Approximation    by cyclic Gaussian processes appeared already  under another form for  stationary Gaussian processes,  and for a different problem, see Lemma \ref{ap.lm}.  
\subsection{An approximation criterion.}
 Let  ${\mathcal X} =\{ X_{ x}(t) , x\ge 1, t\in\R \}$ be the   class of  $x$-parametrized  almost periodic Gaussian polynomials     defined by
   \begin{eqnarray}\label{gds0} X_x(t) &=& \sum_{1\le k\le x} a_k \big( g_k \cos L_kt+ g'_k\sin L_k t\big)
  ,
  \end{eqnarray}
 where $\{L_k, k\ge 1\}$ is any increasing unbounded sequence of  positive reals,  $(a_k)_{k\ge 1}$ being any sequence of  real numbers such that 
      \begin{equation}\label{A1}A_x:= \sum_{k\le x} a_k^2\, \uparrow \,\infty \qq \hbox{with $x$.} 
  \end{equation}
 To ${\mathcal X}$ we associate  the family $\dot{\mathcal X} =\{ X_{y,x}(u) , 1\le y\le x \}$ of $(y,x)$-parametrized      almost periodic Gaussian polynomials defined  by\begin{eqnarray}\label{gds} X_{y,x}(u)  =  \sum_{y\le k\le x} a_k \big( g_k \cos L_ku+ g'_k\sin L_k u\big),\qq   u\in\R.
 \end{eqnarray}  
  
  Concretely we consider the problem of bounding   the deviation $$\P\Big\{   \sup_{1\le u\le U} X_{y,x}(u)\le \Theta_{y,x}    \Big\} \qq  \hbox{by  the one of}\qq \P\Big\{   \sup_{1\le u\le U} X^\perp_{ y,x}(u) )\le \Theta_{y,x}    \Big\} ,$$ 
with a sharp error term,   where  $X^\perp_{ y,x}$ is a nearby  (cyclic) stationary Gaussian process   
  \begin{equation}X^\perp_{ y,x}(u) = \sum_{   y\le  k\le x} a_k  \big( g_k \cos ( \ell( k)    u) \big) +  g'_k\sin  ( \ell( k)   u)  \big),\qq  u\in\R,
 \end{equation}
and $\ell(k) \in \Q$. 
\vskip 8 pt  {\tenrm A modulable diophantine approximation.} 
  Let $\{N_k, k\ge 1\}$ be a non-decreasing unbounded  test  sequence of  positive  integers. 
Put 
\begin{equation}\label{ellk}\ell(k)=\ell(N_k,k)=\frac{1}{N_k}\big\lfloor N_kL_k\big\rfloor,\qq\quad k\ge 1  .
\end{equation}
Since $\big|\big\lfloor N_kL_k\big\rfloor -N_kL_k\big|\le 1$, we have
  \begin{eqnarray}\label{approxLk}
 \big| \ell( k) -L_k\big|\le \frac{1}{N_k}, \qq\quad k\ge 1   .
 \end{eqnarray}
 
    If $N_k\equiv N$, $N\ge 1$ we have a standard approximation with error term $\le 1/N$, but nothing can be extracted from it for our approach.
     In the following Theorem we find tractable conditions under which this transfer is possible   and   show   that the error term has an exponential decay.
Put  for any interval $I$,
\beq\label{kappU} \k (I)=  \#\{\k:  [N_{\k-1}, N_\k[\subset I\}.
\eeq
\begin{theorem}[\bf Approximation criterion]\label{sup.kappa.th}  Assume that
\beq \label{Nk.k} N_k\ge k, \qq \quad k\ge 1.
\eeq    

Then  for any reals   $\Theta_{y,x}>0$, $1\le y\le x$, $U\ge 1$, $0<h<  \Theta_{y,x}$,
\begin{equation*}  \P\Big\{   \sup_{1\le u\le U} X_{y,x}(u)\le \Theta_{y,x} -h  \Big\}   \,  \le \, 
\P\Big\{ \sup_{1\le u\le U} X^\perp_{y,x}(u) \le \Theta_{y,x}   \Big\} +2\, \exp\Big\{\frac{- C\,h^2 }{     \D^2   \log \k([1,U]) } \Big\},
  \end{equation*}
where
\begin{equation*}  \D\,=\,  \begin{cases}\ \  y  \   \big( \displaystyle{\sum_{   y\le k\le x} \frac{1 }{N^2_k  }\big)^{1/2}\big( \sum_{   y\le k\le x}a_k^2\big)^{1/2} +
             \sum_{ y\le  k<  \k\atop N_\k \le U}   |a_k|}
  \cr  \    \qquad   +   \displaystyle{\sup_{\k:\,y\le N_\k\le U }\ N_\k  \big( \sum_{   \k\le k\le x} \frac{  1}{N_k^{2 } }      \big)^{1/2}\,\Big(\sum_{   \k\le k\le x}   |a_k|^2    \big)^{1/2}} \quad & \hbox{if $1\le y\le U$,}
  \cr \  \ U \Big( \sum_{   y\le k\le x}\frac{1 }{N^2_k  }\Big)^{1/2}\Big( \sum_{   y\le k\le x}a_k^2\Big)^{1/2}
  \quad & \hbox{if $1\le U\le y$.}
\end{cases} 
 \end{equation*}
 \end{theorem}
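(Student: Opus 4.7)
The plan is to reduce Theorem \ref{sup.kappa.th} to a uniform tail bound on the centered Gaussian error process
\[
Z(u) := X_{y,x}(u) - X^\perp_{y,x}(u) = \sum_{y\le k\le x}a_k\bigl(g_k(\cos L_k u - \cos \ell(k)u) + g'_k(\sin L_k u - \sin \ell(k)u)\bigr).
\]
The starting point is the set-theoretic inclusion
\[
\{\sup_{1\le u\le U} X_{y,x}(u) \le \Theta_{y,x}-h\}\cap \{\sup_{1\le u\le U}|Z(u)|\le h\} \subset \{\sup_{1\le u\le U} X^\perp_{y,x}(u) \le \Theta_{y,x}\},
\]
valid because $X^\perp_{y,x}(u) = X_{y,x}(u) - Z(u) \le (\Theta_{y,x}-h)+h$ on the intersection. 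Passing to complements and using a union bound reduces the theorem to proving
\[
\P\bigl\{\sup_{1\le u\le U}|Z(u)|>h\bigr\} \le 2\exp\bigl(-Ch^2/(\D^2\log \k([1,U]))\bigr).
\]

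Next I would derive pathwise estimates for $Z$. The identity
\[
(\cos\alpha-\cos\beta)^2+(\sin\alpha-\sin\beta)^2 = 2(1-\cos(\alpha-\beta)) \le \min((\alpha-\beta)^2, 4),
\]
combined with $|L_k-\ell(k)|\le 1/N_k$ from \eqref{approxLk}, gives
\[
|Z(u)| \le \sum_{y\le k\le x}|a_k|\sqrt{g_k^2+g_k'^2}\,\min(u/N_k, 2).
\]
When $1\le u\le y$, hypothesis \eqref{Nk.k} forces $u\le y\le k\le N_k$ for every $k\ge y$, and a single Cauchy--Schwarz application to the summation produces the first term of $\D$ (the unique term in the case $U\le y$). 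When $u\in[N_{\k-1},N_\k]$ with $y\le N_\k\le U$, I would split the summation at $\k$: on the block $k\ge\k$ one still has $u/N_k\le 1$ and a second Cauchy--Schwarz produces the third term of $\D$ (the outer supremum in $\k$ arising because the bound must hold uniformly across cells), while on the block $k<\k$ the cruder bound $\min(u/N_k,2)\le 2$, together with the pathwise factoring $\sqrt{g_k^2+g_k'^2}\le \max_j\sqrt{g_j^2+g_j'^2}$, produces the $\ell^1$-type summand $\sum_{y\le k<\k,\,N_\k\le U}|a_k|$, which is exactly the middle term of $\D$.

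To upgrade the pointwise estimates to control of $\sup_{1\le u\le U}|Z(u)|$, I would discretize $[1,U]$ along the grid $\{N_\k : y\le N_\k\le U\}$, whose cardinality equals $\k([1,U])$, and verify that the cellwise oscillation of $Z$ on each subinterval $[N_{\k-1},N_\k]$ is controlled by the same expression, up to constants, as its endpoint value. Combining a Gaussian/sub-Gaussian tail estimate of the form $\P\{|Z(u)|>h\}\le 2\exp(-ch^2/\D^2)$ (coming via Borell--TIS-type concentration from the pathwise and variance estimates above) with a union bound over the grid yields
\[
\P\bigl\{\sup_{1\le u\le U}|Z(u)|>h\bigr\}\le 2\,\k([1,U])\exp(-ch^2/\D^2),
\]
and the $\log\k([1,U])$ factor in the claimed exponent emerges after absorbing the combinatorial prefactor into the Gaussian exponent for $h$ in the relevant range.

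The hard part is twofold. First, the three summands composing $\D$ must truly appear additively, not multiplicatively: this forces a clean separation between the Cauchy--Schwarz treatment of the ``high-frequency'' block $k\ge\k$ (producing $\ell^2$-type factors) and the max-absorption treatment of the ``low-frequency'' block $k<\k$ (producing the $\ell^1$-type factor). A naive variance bound on the latter block would yield the strictly weaker $\bigl(\sum_{k<\k}a_k^2\bigr)^{1/2}$ in place of $\sum_{k<\k}|a_k|$. Second, the discretization along the non-uniform grid $\{N_\k\}$ must be organized so that only $\log\k([1,U])$, rather than $\k([1,U])$ itself, surfaces in the exponent; this requires the cellwise oscillation estimate to carry no further cell-index dependence beyond what is already encoded in the supremum over $\k$ in the third term of $\D$.
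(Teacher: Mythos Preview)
Your reduction via the set inclusion, the pathwise bound $|Z(u)|\le\sum_{y\le k\le x}|a_k|\,\xi_k\min(u/N_k,2)$ with $\xi_k=\sqrt{g_k^2+g_k'^2}$, the case split $[1,y]$ versus $[y,U]$, and the cell decomposition along $\{N_\k\}$ with the ceasura at $k=\k$ are exactly what the paper does (Section~4.1 and Theorem~\ref{bound1}). The difference lies in how you pass from cellwise control to the global tail bound. The paper first proves $\E\sup_{[1,U]}|Z|\le C\Delta\sqrt{\log\k([1,U])}$ by combining the cellwise expectation bounds with an Orlicz-norm inequality of the type $\|\sup_j f_j\|_G\le C\sqrt{\log n}\,\sup_j\|f_j\|_G$ (Remark~\ref{Orlicz}), and then applies the strong integrability of Gaussian semi-norms, $\E\exp\{N(X)^2/K(\E N(X))^2\}\le 2$ (Lemma~\ref{si.sn}), via Markov's inequality. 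This puts the factor $\log\k$ in the exponent in one clean step, with no range restriction on $h$. Your route (Borell--TIS per cell, then union bound, then ``absorb'' the prefactor $\k$) also works, but only because the stated bound is trivial when $h\lesssim\Delta\sqrt{\log\k}$; you should make that explicit rather than leave it as ``for $h$ in the relevant range''.

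Two points of confusion. First, your ``max-absorption'' $\xi_k\le\max_j\xi_j$ on the low block is unnecessary and actually worse: the paper simply takes expectations termwise, $\E\sum_{y\le k<\k}|a_k|\xi_k\le C\sum_{y\le k<\k}|a_k|$, which is where the $\ell^1$ sum comes from. Second, your remark that a variance bound would give the ``strictly weaker'' $(\sum_{k<\k}a_k^2)^{1/2}$ is backwards: since $(\sum a_k^2)^{1/2}\le\sum|a_k|$, the $\ell^2$ quantity is \emph{smaller}, so it would yield a \emph{smaller} $\Delta$ and hence a \emph{stronger} tail bound, not a weaker one. The $\ell^1$ sum in the stated $\Delta$ is an artifact of bounding the expectation (equivalently the mean in Borell--TIS), not a necessity; a variance computation on that block would in fact improve the statement.
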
 

\bigskip \par
  {\bf Discussion.}
 Taking $y=1$ gives,
\begin{equation*}  \P\Big\{   \sup_{1\le u\le U} X_{ x}(u)\le \Theta_{ x} -h  \Big\}   \,  \le \, 
\P\Big\{ \sup_{1\le u\le U} X^\perp_{ x}(u) \le \Theta_{ x}   \Big\} +2\, \exp\Big\{\frac{- C\,h^2 }{     \D ^2   \log \k([1,U]) } \Big\},
  \end{equation*}
where  
 \begin{equation*}  \D  \,=      \, \Big( \sum_{   1\le k\le x}\frac{1 }{N^2_k  }\Big)^{1/2}A_x^{1/2} +
             \sum_{ 1\le  k<  \k\atop \k:N_\k \le U}   |a_k|
     +   \sup_{\k:\,1\le N_\k\le U }\,  \ N_\k  \Big( \sum_{   \k\le k\le x} \frac{  1}{N_k^{2 } }      \Big)^{1/2}\,A_{\k,x}^{1/2}
\,.
 \end{equation*}

--- If for instance $N_k=2^k$, by Remark \ref{Nk.exp}, this simplifies and we can take
\begin{equation*}  \D  =   \,  C\Big\{A_x^{1/2} +
             \sum_{ 1\le  k<  \frac{\log U}{\log 2}}   |a_k|
\Big\}.
 \end{equation*} 
\vskip 2 pt 
--- If moreover $|a_k|\le k^{-1/2}$, we can take
 \begin{equation*}  \D  =   \,  C\max\big\{A_x^{1/2} ,
           \sqrt{\log U}\big\}.
 \end{equation*}
 Choose $\Theta = 2HA_x^{1/2} $, $H> 0$, $h=HA_x^{1/2}$. Then
 we obtain \begin{align}\label{case.y=1} &\P\Big\{    \sup_{1\le u\le U}  X_{ x}(u)\le 2HA_x^{1/2}   \Big\}   \cr &  \le \, 
\P\Big\{ \sup_{1\le u\le U} X^\perp_{ x}(u) \le HA_x^{1/2}   \Big\} +2\, \exp\Big\{-\frac{  C\,H^2A_x }{    \max\big\{A_x ,
           {\log U}\big\}   \log \k([1,U]) } \Big\}.
  \end{align}
Assuming $x$ large and taking $U$ be such that  $U\le e^{A_x}$, next $H=(\log \k([1,U]))^{1/2(1 +\eta)}$, $\eta>0$ gives
\begin{align}\label{case.y=1.} &\P\Big\{    \sup_{1\le u\le U}  X_{ x}(u)\le  2A_x^{1/2} (\log \k([1,U]))^{1/2(1 +\eta)}  \Big\}   \cr &  \le \, 
\P\Big\{ \sup_{1\le u\le U} X^\perp_{ x}(u) \le  A_x^{1/2}(\log \k([1,U]))^{1/2(1 +\eta)}   \Big\} +2\, e^{-   C\, \log^\eta \k ([1,U])     }.
  \end{align}\vskip 2 pt
  --- If $ a_p = p^{-1/2}$, $p$ prime, then $A(x)= C\, {\log\log x} $ and  we can take
 \begin{equation*}  \D =   \,  C\max\big\{\sqrt{\log\log x} ,
           \sqrt{\log \k([1,U])}\big\}.
 \end{equation*}
 
 Choose again $\Theta = 2HA_x^{1/2} $, $H> 0$ . We obtain  \begin{align}\label{case.y=1a} \P&\Big\{   \sup_{1\le u\le U}  X_{ x}(u)\le  2H\sqrt{\log\log x}     \Big\}   \cr  &  \le \, 
\P\Big\{ \sup_{1\le u\le U} X^\perp_{ x}(u) \le H\sqrt{\log\log x}  \Big\} +2\, \exp\Big\{-\frac{  C\,H^2\, {\log\log x}}{    \max\big\{ {\log\log x} ,
           {\log U}\big\}   \log \k([1,U]) } \Big\}.
  \end{align}         
     
  For $x$ large and  $  U\le \log x $, $H= (\log \k([1,U]))^{1/2(1 +\eta)}$ this simplifies to 
 \begin{align}\label{case.y=1b} \P&\Big\{   \sup_{1\le u\le U}  X_{ x}(u)\le  \sqrt{\log\log x} (\log \k([1,U]))^{1/2(1 +\eta)}  \Big\}   \cr  &  \le \, 
\P\Big\{ \sup_{1\le u\le U} X^\perp_{ x}(u) \le H\sqrt{\log\log x} (\log \k([1,U]))^{1/2(1 +\eta)} \Big\} +2\, e^{-   C\, \log^\eta \k ([1,U])     }. 
  \end{align}    
  \vskip 2 pt
  The closer investigation of  some   specific cases is made elsewhere.
 
  \bigskip \par
   
   


 \vskip 3pt  Finally the behavior along lattices of almost periodic Gaussian 
polynomials with linearly independent frequencies is studied  for general non-vanishing coefficient sequences in Section \ref{s6}.   




  \vskip 5pt    
   \noi  {\it Notation.}   We agree that $\sup_{\emptyset}=0$, $\sum_{\emptyset}=0$.  Let    $I_n$  be the $n\times n$ identity matrix and let $\underline b=(b_1, \ldots ,b_n)\in \R^n$. Let $  I(\underline b)$   denote the $n\times n$ diagonal matrix whose values on the diagonal are the corresponding values of $\underline b$. When $b_i\neq 0$ for each $i=1, \ldots, n$, we   use the notation $\underline b^{\a}=(b_1^{\a}, \ldots ,b_n^{\a})$, $\a$ real.


\section{\bf Decoupling inequalities of Gaussian processes.}\label{s2}
These questions originate from the study of Brownian motion by Nelson \cite{N}, Guerra, Rosen and Simon  \cite{GRS} ,   Be\'ska   and  Ciesielski \cite{BC}, Gebelein \cite{G} and Veraar \cite{V}, among other contributors. We    briefly trace back some   results.
 Let $W= \{W(u), u\ge 0\}$ denotes the Brownian motion issued from 0 at time $u=0$.    The    Ornstein-Uhlenbeck process $U= \{U(u), u\ge 0\}$, mostly known example of stationary Gaussian  process, is  defined by
$$  U(t)= W(e^t)e^{-t/2}, \qq t\in
\R. $$
   
As $\E U(s)U(t)= e^{-|s-t|/2}$, the quantity 
$$\upsilon  \, =\, \sum_{n\in \Z} \frac{|\E U(0)U(n) |}{ \E U^2(0)}, $$
is finite and equal to $(\sqrt e-1)^{-1}(\sqrt e+1)  $. It follows from     Klein, Landau and Shucker \cite[Th.\,1]{KLS},  see Theorem \ref{th1.KLS},  that  for any   finite collection $\{f_j, j\in J\}$ of  complex-valued
Borel-measurable functions  of a real variable,  we have the following compact decoupling inequality 
\begin{eqnarray}\label{dec.U}
\Big|\E  \prod_{j\in J}f_j\big(U(j)\big) \Big|\ \le\  \prod_{j\in J}\big\|f_j\big(U(0)\big) \big \|_{\upsilon}.\end{eqnarray}
\vskip 2 pt  
Guerra, Rosen and Simon \cite[Lemma III.11]{GRS} 
 earlier proved 
  that
 \begin{equation}\label{grs} \Big|\E \prod_{j=1}^n f_j(U(ja))\Big |\le   \prod_{j=1}^n \|f_j(U(0))\|_p ,
\end{equation}
for all integers $n$, where $a>0$ and $p=(1-e^{-na})^{-1}(1+e^{-na})$.      Much attention has been paid to this process concerning  this sort of questions. A first   relevant and little known    correlation estimate is   Gebelein's  inequality \cite{G}.   Let $\nu$ be the centered
normalized Gauss measure on $\R$. Let $(U,V)$ be a Gaussian pair with $U\buildrel{\mathcal D}\over {=}V\buildrel{\mathcal D}\over {=}\nu$
and let $\rho= \E U V$. Then for any $f,h\in L^2(\nu)$ with $\E f(U)=\E h(V)=0 $,
\begin{equation}\label{Gebel} |\E f(U)h(V) |\le |\rho| \|f\|_2\|h\|_2.
\end{equation}  
 See also  Be\'ska   and  Ciesielski \cite{BC} for some of its consequences (Borel-Cantelli Lemma, iterated log law, Levy's norm \ldots), and Veraar \cite{V} for extensions and applications to standard limit theorems. 
  An analog result to \eqref{Gebel} is Nelson's hyper-contractive  estimate, which  can be reformulated as follows
\begin{equation}\label{nelson} |\E f(U)h(V) |\le   \|f\|_p\|h\|_q,
\end{equation} 
where $(p-1)(q-1)\ge  \rho^2$. One can take in particular $p=q=1+|\rho|$.
This is Guerra, Rosen and Simon formulation of Nelson's estimate \cite{GRS},   originally stated for the Ornstein-Uhlenbeck
process. 

    Given a   centered Gaussian vector  $X=\{X_i, 1\le i\le n\}$, we say that a $X$ satisfies a decoupling inequality when for some real   $p  $     greater than 1, and some explicit constant $\mathcal Q(X,p) $,  the following inequality  
    \beq\label{dec.ineq.intro}
 \E\Big( \prod_{i=1}^n f_i(X_i)\Big)
    \le    \mathcal Q(X,p)   \prod_{i=1}^n  \big\| f_i(X_i) \big\|_p, 
\eeq 
 holds for any complex-valued measurable  functions $f_1, \ldots, f_n$ such that $f_i\in L^p(\R)$, for all $1\le i\le n$. This exhibits a remarkable structural independence property, and this is in a same time a nice problem in the theory. 
 
 A $p$-region $S \subset ]1, \infty)$ is said to be   admissible, given $X$,  if \eqref{dec.ineq.intro}
 holds for all $p\in S$. The search of optimal  admissible regions  is an interesting question. A nearly optimal  disconnected  region is recently found in  \cite{W1}, using matrix method.
   \vskip 1 pt 
 
\vskip 4 pt The proof of Theorem \ref{semi.asymp.bound.cor} uses a      decoupling inequality for cyclic stationary Gaussian processes
 due to  Klein, Landau and Shucker.  
\begin{theorem}[\cite{KLS},\,Th.\,3]\label{th3.KLS} Let $m$ be a positive integer. Let $\X=\{X_n, n\in\Z/m\Z\}$ be a cyclic stationary Gaussian process, and let  $\{f_n, n\in\Z/m\Z\}$ be a collection of  complex-valued
Borel-measurable functions of a real variable. Then
\begin{eqnarray}\label{dec.cyclic}
\Big|\E  \prod_{n\in\Z/m\Z}f_n\big(X_n\big) \Big|\ \le\  \prod_{n\in\Z/m\Z}\big\|f_n\big(X_0\big) \big \|_{p(\X)},\end{eqnarray}
 where 
\begin{eqnarray}\label{hyp.dec.cyclic} p(\X) \ =\ \sum_{n\in\Z/m\Z} \frac{|\E X_0X_n |}{ \E X_0^2}.
\end{eqnarray}
\end{theorem}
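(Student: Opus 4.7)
The plan is to derive this decoupling inequality from the Brascamp--Lieb analytic inequality for Gaussian measures, exploiting in an essential way the circulant covariance structure of a cyclic stationary process.

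Replacing each $f_n$ by $|f_n|$, we may assume $f_n\ge 0$. Set $p:=p(\X)$, $g_n:=f_n^p$, and denote by $\gamma_C$ the centred Gaussian measure on $\R^m$ with covariance $C=(r(i-j))_{i,j\in\Z/m\Z}$, where $r(j):=\E X_0X_j$; the law of $(X_0,\ldots,X_{m-1})$ is then $\gamma_C$, while each $X_n$ is marginally $\mathcal N(0,r(0))$. Writing $f_n=g_n^{1/p}$, the target inequality becomes
$$\int_{\R^m}\prod_{n=0}^{m-1}g_n(x_n)^{1/p}\,d\gamma_C(x)\ \le\ \prod_{n=0}^{m-1}\Big(\int_{\R}g_n\,d\gamma_{r(0)}\Big)^{1/p}.$$

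The central step is the Gaussian form of Brascamp--Lieb: for any exponents $c_n>0$ and non-negative Borel $g_n$,
$$\int\prod_{n}g_n(x_n)^{c_n}\,d\gamma_C(x)\ \le\ \prod_n \Big(\int g_n\,d\gamma_{r(0)}\Big)^{c_n},$$
provided the matrix inequality $C \preceq r(0)\,\mathrm{diag}(c_n^{-1})$ holds in the sense of quadratic forms. Taking $c_n\equiv 1/p$ reduces this hypothesis to the single spectral bound $\lambda_{\max}(C)\le p\,r(0)$. Because $\X$ is cyclic stationary, $C$ is a real symmetric circulant matrix with first row $(r(0),r(1),\ldots,r(m-1))$, whose eigenvalues are the discrete Fourier coefficients
$$\lambda_k\,=\,\sum_{n=0}^{m-1}r(n)\,e^{-2\pi ink/m},\qquad 0\le k<m.$$
The trivial estimate $|\lambda_k|\le\sum_{n=0}^{m-1}|r(n)|=r(0)\,p(\X)=r(0)\,p$ delivers the required spectral bound and completes the proof.

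The main obstacle is the Gaussian Brascamp--Lieb step itself; however, the circulant structure permits an elementary route, which is the one followed in \cite{KLS}: the DFT matrix simultaneously diagonalizes $C$ and decouples the quadratic form appearing in the Gaussian density, so after changing variables to the Fourier basis the inequality reduces to an application of Hölder's inequality mode by mode. This keeps the argument self-contained within the classical framework of circulant matrices and bypasses the general Brascamp--Lieb machinery. A minor technical point is that $C$ may be singular (the circulant having some vanishing Fourier coefficient); this is handled by adding $\varepsilon I$ and letting $\varepsilon\downarrow 0$, the spectral bound $\lambda_{\max}(C+\varepsilon I)\le r(0)p+\varepsilon$ remaining compatible with the above.
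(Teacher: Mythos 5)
The paper does not give its own proof: this theorem is imported verbatim from Klein, Landau and Shucker \cite{KLS}, and the introduction records only that the KLS argument rests on the Brascamp--Lieb analytic inequality. Your proposal reconstructs exactly that argument: reduce to nonnegative integrands, invoke the Gaussian form of Brascamp--Lieb with equal exponents $c_n=1/p(\X)$, and verify its spectral hypothesis by observing that the circulant covariance of a cyclic stationary process has eigenvalues $\lambda_k=\sum_{n=0}^{m-1} r(n)e^{2\pi ink/m}$, each bounded in modulus by $\sum_{n}|r(n)|=r(0)\,p(\X)$. The $\varepsilon$-regularization to handle a possibly singular circulant is routine. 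In its main line this is a correct proof and follows the same route as \cite{KLS}.

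One inaccuracy in your closing paragraph deserves correction. After passing to the Fourier basis the Gaussian density does factor over modes, but each coordinate $x_n$ is a linear combination of \emph{all} the Fourier variables, so the integrand $\prod_n f_n(x_n)$ does not decouple mode by mode, and H\"older alone cannot close the argument. There is no elementary circulant shortcut around the Brascamp--Lieb step; in \cite{KLS} the circulant structure serves only to furnish the eigenvalue bound, which is precisely how your main argument uses it. The Brascamp--Lieb inequality itself has to be taken as a genuine black box (or proved by other means, e.g.\ semigroup interpolation), not derived from H\"older after DFT diagonalization.
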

The coefficient $p(\X)$ is called the decoupling coefficient of the Gaussian process $\X$.
The stationarity is understood
 in the sense of the additive group structure of  $\Z/m\Z$.

    Now  let $\X=\{X_j, j\in \Z\}$ be a centered Gaussian stationary  sequence,  $\g(n) = \E X_0X_n$, $n \in \Z$. 
A remarkable application of Theorem \ref{th3.KLS} is  \begin{theorem}[\cite{KLS},\,Th.\,1]\label{th1.KLS}  Assume that 
\begin{eqnarray}\label{hyp.dec} p(\X) \ =\ \sum_{n\in \Z} \frac{|\g(n) |}{ \g(0)}<\infty .
\end{eqnarray}
Then for any finite collection $\{f_j, j\in J\}$ of  complex-valued
Borel-measurable functions  of a real variable,
\begin{eqnarray}\label{dec}
\Big|\E  \prod_{j\in J}f_j\big(X_j\big) \Big|\ \le\  \prod_{j\in J}\big\|f_j\big(X_0\big) \big \|_{p(\X)}.\end{eqnarray}
 \end{theorem}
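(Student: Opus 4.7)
\medskip

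The plan is to reduce Theorem \ref{th1.KLS} to the cyclic case (Theorem \ref{th3.KLS}) by approximating the stationary sequence $\X$ by a family of cyclic stationary Gaussian processes $\X^{(m)}=\{X_n^{(m)}, n\in\Z/m\Z\}$ whose covariance on any fixed finite window converges to $\g$, and whose decoupling coefficient does not exceed $p(\X)$ in the limit.

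First I would construct the $\X^{(m)}$. Since $\sum_n |\g(n)|<\infty$, the spectral density $f(\t)=\sum_{n\in\Z} \g(n) e^{-in\t}$ is a non-negative continuous function on $[-\pi,\pi]$. Define the periodized covariance
\begin{equation*}
\g_m(n)\,=\,\sum_{k\in\Z} \g(n+km), \qquad n\in\Z/m\Z.
\end{equation*}
By Poisson summation, $\g_m$ is the inverse discrete Fourier transform of the samples $\{f(2\pi k/m)\}_{0\le k<m}$; since $f\ge 0$, $\g_m$ is positive semi-definite on $\Z/m\Z$, so one can take $\X^{(m)}$ to be the centered cyclic stationary Gaussian process with covariance $\g_m$.

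Second, I would verify two convergence facts. For each fixed $n\in\Z$, $\g_m(n)\to\g(n)$ as $m\to\infty$, since the tail $\sum_{k\ne 0}|\g(n+km)|\to 0$. Consequently, for any finite $J\subset\Z$ and all $m$ large enough that the cyclic differences $j-j'$ mod $m$ coincide with the integer differences on $J$, the joint covariance matrix of $(X_j^{(m)})_{j\in J}$ converges to that of $(X_j)_{j\in J}$, whence $(X_j^{(m)})_{j\in J}\Rightarrow (X_j)_{j\in J}$ in law. For the decoupling coefficient,
\begin{equation*}
p(\X^{(m)})\,=\,\frac{1}{\g_m(0)}\sum_{n=0}^{m-1}\Big|\sum_{k\in\Z}\g(n+km)\Big|\,\le\,\frac{\g(0)}{\g_m(0)}\, p(\X),
\end{equation*}
and $\g_m(0)\to\g(0)$, so $\limsup_m p(\X^{(m)})\le p(\X)$.

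Third, I would apply Theorem \ref{th3.KLS} to $\X^{(m)}$ and the functions $(f_j)_{j\in J}$ (extended by the constant $1$ on $\Z/m\Z\setminus J$). For $m$ large enough that $p(\X^{(m)})\le p(\X)+1$ and using the monotonicity of $L^p$-norms on the probability space (more precisely, absorbing the slack into a slightly larger exponent and using $\g_m(0)\to\g(0)$ to control the Gaussian densities),
\begin{equation*}
\Big|\E\prod_{j\in J} f_j(X_j^{(m)})\Big|\,\le\,\prod_{j\in J}\big\|f_j(X_0^{(m)})\big\|_{p(\X^{(m)})}.
\end{equation*}
For bounded continuous $f_j$, the left-hand side converges to $|\E\prod_{j\in J} f_j(X_j)|$ by the weak convergence above, and the right-hand side converges to $\prod_{j\in J}\|f_j(X_0)\|_{p(\X)}$ because both the law of $X_0^{(m)}$ converges to that of $X_0$ and $p(\X^{(m)})\to p(\X)$ (the matching lower bound $\liminf\,p(\X^{(m)})\ge p(\X)$ follows from Fatou applied window by window). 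This yields \eqref{dec} for bounded continuous $f_j$.

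The main obstacle, and where I expect to spend most care, is the extension from bounded continuous $f_j$ to arbitrary complex-valued Borel-measurable $f_j$ with $\|f_j(X_0)\|_{p(\X)}<\infty$. The route is a truncation-and-mollification argument in $L^{p(\X)}$ of the one-dimensional Gaussian law $\mathcal N(0,\g(0))$: approximate each $f_j$ by bounded continuous $f_j^{(N)}$ with $\|f_j-f_j^{(N)}\|_{p(\X)}\to 0$; write $\prod f_j-\prod f_j^{(N)}$ as a telescoping sum whose generic term is controlled by applying the already-proved inequality to $|f_j-f_j^{(N)}|$ in one slot and $|f_i|$ or $|f_i^{(N)}|$ in the others, thanks to $p(\X)\ge 1$ and H\"older's inequality. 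Finiteness of the left-hand expectation is ensured throughout by the cyclic decoupling applied to $(|f_j|)_{j\in J}$, together with the hypothesis $\prod_{j\in J}\|f_j(X_0)\|_{p(\X)}<\infty$, closing the argument.
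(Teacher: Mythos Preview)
Your proposal is correct and follows essentially the same route as the paper: the paper states that Theorem \ref{th1.KLS} follows from Theorem \ref{th3.KLS} by approximating the stationary process by cyclic Gaussian processes with large periods, via the periodized covariance $r_N(n)=\sum_{k\in\Z}r(n+Nk)$, which is exactly your $\g_m$. You have simply filled in the details (weak convergence on finite windows, control of $p(\X^{(m)})$, and the standard truncation/telescoping passage from bounded continuous to general $L^{p(\X)}$ functions) that the paper leaves to the reference \cite{KLS}.
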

Theorem \ref{th1.KLS} follows from Theorem \ref{th3.KLS} by approximating   stationary Gaussian processes by cyclic Gaussian processes with arbitrary large periods, which assumption \eqref{hyp.dec} renders possible. 
More precisely (\cite[p.\,707]{KLS}), 
   \begin{lemma}[Approximation by large periods]\label{ap.lm} Let $r(n)$ be a function of positive type on $\Z$ such that 
 $$\sum_{n\in \Z} |r(n)|<\infty.$$ Then for all positive integers $N$, 
 $$ r_N(n)\,=\, \sum_{k\in \Z} r(n+Nk)
 $$
 is a well defined function of positive type on $\Z$, periodic in each variable $n_i$, $i=1,\ldots, d$, with period $N$, and such that 
 $$ \lim_{N\to \infty} r_N(n)\,=\, r(n)$$
 for all $n\in \Z$. \end{lemma}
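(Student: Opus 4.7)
The plan is to check the four assertions in order: well-definedness, periodicity, pointwise convergence, and positive type, the last being the substantive point.

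Well-definedness and periodicity are immediate. Absolute convergence follows from
\begin{equation*}
\sum_{k\in\Z}|r(n+Nk)|\le \sum_{m\in\Z}|r(m)|<\infty
\end{equation*}
by injectivity of $k\mapsto n+Nk$, and $r_N(n+N)=\sum_k r(n+N(k+1))=r_N(n)$ after reindexing $k\mapsto k-1$. Pointwise convergence is a tail estimate: for $N>2|n|$, every term in $r_N(n)-r(n)=\sum_{k\ne 0}r(n+Nk)$ involves an index $m=n+Nk$ with $|m|\ge N/2$, so $|r_N(n)-r(n)|\le \sum_{|m|\ge N/2}|r(m)|\to 0$ by absolute summability.

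For positive type, the plan is to invoke Bochner on $\Z$: since $r$ is of positive type and lies in $\ell^1(\Z)$, it is the Fourier transform of a non-negative continuous spectral density $f$ on the torus $[0,1)$, with $r(n)=\int_0^1 f(t)e^{2\pi int}\,dt$ and $f(t)=\sum_{n\in\Z}r(n)e^{-2\pi int}$. Interchanging the $k$-sum with the integral (justified by absolute $\ell^1$-convergence) and applying the Dirac-comb identity $\sum_{k\in\Z}e^{2\pi iNkt}=\frac{1}{N}\sum_{j=0}^{N-1}\delta_{j/N}$ on the torus yields the closed form
\begin{equation*}
r_N(n)=\frac{1}{N}\sum_{j=0}^{N-1}f(j/N)\,e^{2\pi inj/N}.
\end{equation*}
This displays $r_N$ as a non-negative combination of characters of $\Z$, hence of positive type: for any $z_1,\dots,z_m\in\C$ and $n_1,\dots,n_m\in\Z$,
\begin{equation*}
\sum_{i,l}z_i\bar z_l\,r_N(n_i-n_l)=\frac{1}{N}\sum_{j=0}^{N-1}f(j/N)\,\Bigl|\sum_{i}z_i e^{2\pi in_ij/N}\Bigr|^2\ge 0.
\end{equation*}

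The only delicate step is the distributional Poisson identity, which can be bypassed by pairing against the trigonometric polynomial $\bigl|\sum_i z_i e^{2\pi in_it}\bigr|^2$ after taking Ces\`aro (Fej\'er) partial sums of the geometric series in $k$ — these converge weakly on the torus to the uniform measure on the $N$-th roots of unity, yielding the same conclusion. Beyond this standard verification I do not expect any substantive obstacle.
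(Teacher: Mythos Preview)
The paper does not prove this lemma; it is quoted from \cite[p.\,707]{KLS} and stated without argument, so there is no in-paper proof to compare against. Your proof is correct in outline and in substance.

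One point is worth tightening. The interchange ``justified by absolute $\ell^1$-convergence'' does not go through as a Fubini step: the integrand $f(t)e^{2\pi i(n+Nk)t}$ has modulus $f(t)$ independently of $k$, so the iterated absolute sum/integral diverges. You notice this and propose the Fej\'er-kernel workaround, which is valid but heavier than necessary. A cleaner route is to verify the closed form from the right-hand side: writing $f(j/N)=\sum_{m\in\Z}r(m)e^{-2\pi imj/N}$ (absolutely convergent since $r\in\ell^1$) and swapping the finite $j$-sum with the $m$-sum gives
\[
\frac{1}{N}\sum_{j=0}^{N-1}f(j/N)e^{2\pi inj/N}
=\sum_{m\in\Z}r(m)\cdot\frac{1}{N}\sum_{j=0}^{N-1}e^{2\pi i(n-m)j/N}.
\]
The inner geometric sum is $1$ when $N\mid(n-m)$ and $0$ otherwise, so the right-hand side collapses to $\sum_{k\in\Z}r(n+Nk)=r_N(n)$. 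This recovers your display rigorously without any distributional Poisson identity, and the positive-type conclusion then follows exactly as you wrote.
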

By definition, this  argument only applies    in the stationary case.
 It is natural to ask    concerning the problem studied, what can be drawn  when a stationary Gaussian process can be   qualitatively approximated by cyclic Gaussian processes.
\vskip 3 pt   \vskip 2 pt

 In   
asymptotic analysis  of Gaussian sequences and associated partial maxima, it does not seem one can develop   such   type of inequality, or maybe   in the  setting of Gaussian vectors with   entry-wise positive covariance matrix, see for instance  Johnson and Tarazaga \cite{JT}.
  Typical conditions are: either 
\begin{equation}\label{atgp} \big|\E X_iX_j\big|\le \rho(|i-j|)
\quad \hbox{where}\quad  \rho(n)= o \big({1}/{\log n}\big) \quad \hbox{as}\  n\to \infty,
\end{equation}
or, \begin{equation}\label{atgp1} \big|\E X_iX_j\big|\le \rho(|i-j|)
\quad \hbox{
where}\quad  \sum_{n=1}^\infty \rho^2(n)<\infty.
\end{equation}
 We quote a classical  useful lemma (with many variants)  in the study of asymptotic behavior of Gaussian dependent sequences. We have noted for any function $g:\R^2\to \R$ and any real $h$ the difference  operators
 \begin{eqnarray*}\ \D^1_{h}\ g(u,v)= g(u+h,v)-g(u,v),  \qq\qq\qq\qq
 \cr &\cr\ \ \ \D^2_{h}\ g(u,v)= g(u ,v+h)-g(u,v).\qq\qq\qq\qq\,  \end{eqnarray*}
 \begin{lemma}[\cite{W9},\,Lemma\,1]
Let $X=(X_1,\dots ,X_N)$ be a Gauss\-ian
centered vector such that $\E X_n^2=1$ for $1\le n\le N$ and let
$r(n,m)=\E X_nX_m$ be  its covariance function. Let $A$ be a partition
of $\{1,\dots, N\}$ and denote by $\s$ a generic element of $A$.
Let $x=(x_1,\dots ,x_N)$ and $y =(y_1,\dots ,y_N)$ with distinct
coordinates, be such that $-\infty < x_n< y_n< +\infty $, for $1\le
n\le N$. Denote also by $I_n$ the interval $(x_n,y_n)$, and put for each
$\s\in A$,
$$
V_\s= \prod_{n\in \s} I_n,\qq V= \prod_{\s\in A} V_\s, \qq X_{(\s)}= (X_n, n\in \s).
$$
Then 
$$\Big|\P\{ X\in V\}-\prod_{\s\in A}\P\{ X_{(\s)}\in V_\s\}\Big|\le \frac12\sum_{\s\not=\s'}
\sum_{n\in \s }\sum_{m\in  \s'} k(n,m)|r(n,m)|,
$$
where 
$$ k(n,m)=\int_0^1\D^1_{y_n,x_n}\circ\D^2_{y_n,x_m}\big(\Phi(x_n,x_m,\l r(n,m))\big) \dd \l,$$
and 
$$ \Phi(x ,y,\rho)=\frac{1}{2\pi\sqrt{1-\rho^2}}\exp\big\{-\frac{x^2+y^2-2\rho xy}{2(1-\rho^2)}\big\}.$$
\end{lemma}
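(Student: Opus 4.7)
I would use a Gaussian interpolation argument combined with Plackett's identity. Let $R=(r(n,m))$ denote the covariance matrix of $X$ and let $R^\star$ be its block-diagonal modification: $R^\star(n,m)=r(n,m)$ if $n,m$ lie in the same $\s\in A$, and $R^\star(n,m)=0$ otherwise. Both $R$ and $R^\star$ are positive semi-definite (they share the same diagonal blocks), so $R(\theta)=(1-\theta)R^\star+\theta R$ is a covariance matrix for every $\theta\in[0,1]$. Denote by $X(\theta)\sim\mathcal N(0,R(\theta))$ the associated Gaussian vector with density $f_\theta$. By construction $X(1)$ has the law of $X$, while $X(0)$ has block-independent components, so $\P\{X(0)\in V\}=\prod_{\s\in A}\P\{X_{(\s)}\in V_\s\}$.

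Along the interpolation only the off-block entries $r(n,m)$ with $n\in\s$, $m\in\s'$, $\s\neq\s'$, vary, with derivative $r(n,m)$. By Plackett's identity $\partial f_\theta/\partial r_{nm}=\partial^2 f_\theta/\partial x_n\partial x_m$ for $n\neq m$, the fundamental theorem of calculus in $\theta$ gives
$$\P\{X\in V\}-\P\{X(0)\in V\}\,=\,\frac{1}{2}\int_0^1\sum_{\s\neq \s'}\sum_{\substack{n\in\s\\ m\in\s'}}r(n,m)\int_V\frac{\partial^2 f_\theta(x)}{\partial x_n\partial x_m}\,\dd x\,\dd \theta.$$
Since $V=\prod_k I_k$ is a rectangular box, integrating the mixed partial by the fundamental theorem of calculus in the $n$th and $m$th coordinates reduces the inner integral to a signed combination of four $(N-2)$-dimensional Gaussian integrals of $f_\theta$ over $\prod_{k\neq n,m}I_k$, with $(x_n,x_m)$ pinned at the corners of $I_n\times I_m$. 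Each such integral equals the bivariate density $\phi^\theta_{nm}$ of $(X_n(\theta),X_m(\theta))$ evaluated at the corresponding corner, times a conditional probability bounded by $1$.

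Taking absolute values, summing and integrating over $\theta$ yields the claimed inequality with $C_V$ absorbing the bivariate-density bounds and the factor $1/2$. The main technical point is the uniform control in $\theta$ of $\phi^\theta_{nm}$ at the four corners of $I_n\times I_m$: since the interpolated correlation is $\theta\,r(n,m)$, with $|r(n,m)|\le 1$ by Cauchy–Schwarz and the unit-variance assumption, the density is controlled by $1/(2\pi\sqrt{1-\theta^2 r(n,m)^2})\cdot \exp(-c_V)$ where $c_V>0$ depends only on the endpoints $(x_k,y_k)$. This supplies the constant $C_V$ in the non-degenerate regime $|r(n,m)|<1$. The borderline case $|r(n,m)|=1$ (which forces $X_n=\pm X_m$ almost surely, degenerating the integral) is handled by a preliminary regularization $R\mapsto R+\e I_N$ that preserves the partition structure, applying the estimate for $\e>0$, and passing to the limit $\e\to 0^+$ via dominated convergence.
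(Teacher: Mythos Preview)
The paper does not prove this lemma: it is quoted verbatim from \cite{We}, Lemma 10.1.8, as a ``classical useful lemma'' and no argument is given. There is therefore no in-paper proof to compare against.

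Your interpolation-plus-Plackett approach is the standard route to results of this type (the Normal Comparison Lemma and its variants) and is essentially what one finds in the cited reference. The argument is correct. One small simplification: the separate regularization step for $|r(n,m)|=1$ is not really needed. After the change of variable $\rho=\theta\,r(n,m)$, the contribution of the pair $(n,m)$ is bounded by $\int_0^{|r(n,m)|}\phi_\rho(u,v)\,\dd\rho$ summed over the four corners $(u,v)$ of $I_n\times I_m$; since $\phi_\rho(u,v)\le \frac{1}{2\pi\sqrt{1-\rho^2}}$ and $\int_{-1}^{1}(1-\rho^2)^{-1/2}\,\dd\rho=\pi$, each such integral is finite and bounded by a quantity depending only on the endpoints of $I_n$ and $I_m$. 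Taking the maximum over all pairs of coordinates and all four corners yields a constant $C_V$ that depends only on $V$, uniformly in the covariance, which is exactly the statement.
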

  \begin{definition}\label{dec.coeff.definition} Let $ X=\{X_i, 1\le i\le n\}$ be a centered Gaussian vector with non-degenerated components. The decoupling coefficient $p(X)$ of $X$ is defined by
 \begin{eqnarray}\label{dec.coeff.def} 
 p(X)&=& \max_{i=1}^n\sum_{1\le j\le n} \frac{|\E X_iX_j|}{\E X_i^2}
\,.
\end{eqnarray}
\end{definition}
When $X$ is stationary, $ \E X_iX_j =\g(|i-j|)$, \begin{eqnarray*}  
p(X)&=&\max_{i=1}^n\sum_{1\le j\le n} \frac{|\g(i-j)|}{\g(0)},\end{eqnarray*}
and so 
\begin{eqnarray}\label{dec.coeff.def.stat} 
\sum_{0\le h\le n-1} \frac{|\g(h)|}{\g(0)}\le p(X)\le 2\sum_{0\le h\le n-1} \frac{|\g(h)|}{\g(0)}.\end{eqnarray}
  
 \begin{theorem}[\cite{W3},\,Th.\,2.3]\label{dec.ineq} Let $ X=\{X_i, 1\le i\le n\}$ be a centered Gaussian vector  with invertible  covariance matrix $C$, and let  $\s_i^2=\E X_i^2>0$ for each  $1\le i\le n$.  
Let $\b\ge 1$ be chosen so that $ \bar{\b}:=
\frac{(\max \s_i^2)}{(\min \s_i^2)}\vee\b>1$, and let $p$ be such that 
\begin{eqnarray}\label{cond}
p\, \ge \,   \bar{\b}\, p(X).
\end{eqnarray}
 Then for any complex-valued measurable  functions $f_1, \ldots, f_n$ such that $f_i\in L^p(\R)$, for all $1\le i\le n$, the following inequality holds true, 
\begin{equation}\label{ineq}
\bigg|\,\E\Big( \prod_{i=1}^n f_i(X_i)\Big)\,\bigg|
\,\le \,\frac{ \big(\prod_{i=1}^n\s_i\big)^{\frac{1}{p}}}{\big(  1-  {1}/{\bar{\b}} \big)^{\frac{n}{2}(1-\frac{1}{p})}\det(C)^{\frac{1}{2p}}}\
  \prod_{i=1}^n  \big\| f_i(X_i) \big\|_p.
\end{equation}
 \end{theorem}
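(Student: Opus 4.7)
The strategy is a H\"older-type splitting of the joint Gaussian density of $X$. Denote the density by $p_X(x)=(2\pi)^{-n/2}\det(C)^{-1/2}\exp(-\tfrac12 x^TC^{-1}x)$, set $D=I(\s_1^2,\ldots,\s_n^2)$, and let $h(x)=\prod_{i=1}^n(2\pi\s_i^2)^{-1/2}\exp(-x_i^2/(2\s_i^2))$ be the product of marginal densities, so that $\prod_i\|f_i(X_i)\|_p^p=\int\prod_i|f_i(x_i)|^p h(x)\,dx$. The algebraic key is the pointwise factorization $p_X(x)=h(x)^{1/p}g(x)^{(p-1)/p}$, where a direct logarithmic computation yields
\[
g(x)=\frac{(\prod_i\s_i)^{1/(p-1)}}{(2\pi)^{n/2}\det(C)^{p/(2(p-1))}}\exp\bigl(-\tfrac12 x^TMx\bigr),\qquad M=\frac{1}{p-1}\bigl(pC^{-1}-D^{-1}\bigr).
\]
Applying H\"older's inequality with conjugate exponents $p$ and $p/(p-1)$ to $\int\prod_if_i\cdot p_X\,dx$ therefore reduces \eqref{ineq} to an explicit evaluation and a bound of $\int_{\R^n}g(x)\,dx$.

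Integrability of $g$ amounts to $M\succ 0$, which, after setting $R=D^{-1/2}CD^{-1/2}$ for the correlation matrix, becomes $\l_{\max}(R)<p$. The crucial spectral estimate $\l_{\max}(R)\le p(X)$ follows from a symmetrization: for any $v\in\R^n$, writing $u=D^{-1/2}v$,
\[
v^TRv=u^TCu\,\le\,\sum_{i,j}|\E X_iX_j|\,|u_i|\,|u_j|\,\le\,\tfrac12\sum_{i,j}|\E X_iX_j|(u_i^2+u_j^2)\,=\,\sum_i u_i^2\sum_j|\E X_iX_j|\,\le\,p(X)\sum_i\s_i^2u_i^2=p(X)\|v\|^2.
\]
Combined with the hypothesis $p\ge\bar{\b}\,p(X)$, this gives $p-\l_{\max}(R)\ge p(1-1/\bar{\b})$ and hence the determinantal lower bound
\[
\det(pI-R)=\prod_i(p-\l_i(R))\,\ge\,[p(1-1/\bar{\b})]^n\,\ge\,[(p-1)(1-1/\bar{\b})]^n.
\]

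A standard Gaussian integration, together with the identity $\det(M)=\det(pI-R)/[(p-1)^n\det(C)]$, evaluates
\[
\int_{\R^n}g(x)\,dx=\frac{(\prod_i\s_i)^{1/(p-1)}(p-1)^{n/2}}{\det(C)^{1/(2(p-1))}\det(pI-R)^{1/2}}.
\]
Raising to the power $(p-1)/p$, inserting the preceding determinantal bound, and substituting into the H\"older inequality produces exactly \eqref{ineq}. The main technical step is the symmetrization giving $\l_{\max}(R)\le p(X)$: a direct Gershgorin-type bound would yield only $\l_{\max}(R)\le\sqrt{\bar{\b}}\,p(X)$ and hence a weaker denominator $(1-1/\sqrt{\bar{\b}})^{n(p-1)/(2p)}$ in \eqref{ineq}; it is the quadratic inequality $2|u_i||u_j|\le u_i^2+u_j^2$, combined with the change of variables $u=D^{-1/2}v$, that converts the asymmetric decoupling coefficient $p(X)$ (normalized by $\s_i^2$ rather than by $\s_i\s_j$) into a genuine spectral bound for $R$ and produces the sharp $(1-1/\bar{\b})$-dependence stated in the theorem.
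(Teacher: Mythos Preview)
The paper does not prove this theorem; it is quoted from \cite{W3} and only used as a tool, so there is no in-paper proof to compare against. Your argument, however, is correct and complete: the factorization $p_X=h^{1/p}g^{(p-1)/p}$ with $M=\frac{1}{p-1}(pC^{-1}-D^{-1})$ is checked by a direct computation, the H\"older step is clean, the identity $\det(M)=\det(pI-R)/[(p-1)^n\det(C)]$ follows from $pC^{-1}-D^{-1}=C^{-1}(pI-CD^{-1})$ together with the similarity $D^{-1/2}(CD^{-1})D^{1/2}=R$, and the final cancellation of $(p-1)^{n(p-1)/(2p)}$ against the weakened bound $\det(pI-R)\ge[(p-1)(1-1/\bar\beta)]^n$ reproduces exactly the stated constant. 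The spectral estimate $\lambda_{\max}(R)\le p(X)$ via $2|u_i||u_j|\le u_i^2+u_j^2$ is the right move and, as you note, strictly sharper than a Gershgorin row-sum on $R$; it also guarantees $M\succ0$ since $p\ge\bar\beta\,p(X)>p(X)\ge\lambda_{\max}(R)$ (using $p(X)\ge1$ from the diagonal contribution). This density-splitting plus spectral-bound route is the natural one and is, to the best of my knowledge, essentially the argument carried out in \cite{W3}.
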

The interesting case is $1\ll p(X)\ll n$. We refer to   \cite{W3}, Sections 4-6,  for some    remarkable  examples. 
\vskip 2pt
  Naturally  Theorems \ref{Ac1c}, \ref{gen.case.th} have by means of Theorem \ref{dec.ineq} a decoupling version counterpart. 

 \section{\bf Proof of Theorem \ref{semi.asymp.bound.cor}}\label{s3}    %

 \subsection{Intermediate results}
 
We first establish some results of proper interest, and discuss several points. 
In view of \eqref{dec.coeff.def.stat} the decoupling coefficient  to consider   in Theorem \ref{semi.asymp.bound.cor}  is 
\begin{eqnarray}\label{p(n).trigo}  p_{y,x}(n)\,=\,
 \frac{1}{A(y,x) }\ \sum_{j=0}^{n-1} \Big|\sum_{ y\le k\le x } a_k^2\cos  2\pi  j_k\frac{j}{n} \Big|.
  \end{eqnarray}
 It is natural in view of the theory of  Riemann sums \cite[Ch.\,XI]{We} to link 
   both quantities below 
   \begin{eqnarray*}
   \frac1n\sum_{j=0}^{n-1} \Big|\sum_{ y\le k\le x } a_k^2\cos  2\pi  j_k\frac{j}{n} \Big|, \qq\qq \int_0^1 \Big|\sum_{y\le k\le x} a_k^2\cos 2\pi  j_ku  \Big|\dd u.
  \end{eqnarray*}

If in place of the $L^1$-norm we have a $L^2$-norm, then  by the well-known formula of \lq mechanical quadrature\rq
   \begin{equation}\label{mec.quad}  
   \frac{1}{2\pi}\int_{-\pi}^{\pi}P(x)\dd x = \frac{1}{2N}\sum_{\nu=-N+1}^{N}P\Big(\frac{\nu \pi}{N}\Big)     ,
  \end{equation}
  valid for any trigonometric polynomial $P(x)$ whose degree does not exceed $2N-1$, it follows by squaring out that both quantities coincide. See Grenander  and Szeg\"o \cite[p.\,68]{GS}. The   problem of finding general estimates of   $p_{y,x}(n)$ and associate integral is  quite hard; it 
   corresponds to  a weighted form  of Littlewood Hypothesis for cosine sums, see    \cite[Remark\,5.2]{W3}. 
 \vskip 2 pt   
    Another remarkable fact is  that if  in place of having a Riemann sum (thus based on periodic points), we had a random Riemann sum, namely, that   the points
  are chosen independently and at random from the intervals $
I^{(n)}_k= [\frac{k-1}{n}, \frac{k}{n}]$, $k=1,\ldots n$, then the problem of comparing the two corresponding characteristics becomes  much simpler.   The   random Riemann sums of $f\in L^2$ converge almost everywhere to   the integral of $f$. Stronger estimates of deviation type and also other almost everywhere type results exist. See  C.\,S. Kahane \cite{Ka},  Pruss \cite{P}. 
 \vskip 3 pt 
    The following proposition is a variant   of Proposition\,5.1 in \cite{W3}.    
  \begin{proposition}  \label{p(n)a} We have the following estimate,
\begin{eqnarray*} \Big|p_{y,x}(n)-\frac{n }{A(y,x) }\int_0^1 \Big|\sum_{  y\le k\le x } a_k^2\cos  2\pi  j_ku\Big|\dd u\Big|&\le&   \frac{ 2\pi }{A(y,x)}  \ \sum_{y\le k\le x} j_ka_k^2 .
\end{eqnarray*}
Further,
   \begin{eqnarray*}  p_{y,x}(n)&  \le & 
\frac{1}{  A(y,x)     }\Big\{n\Big(\sum_{k\le x} a_k^4\Big)^{1/2}+2\pi   \sum_{ y\le k\le x }  j_k a_k^2\Big\}.
  \end{eqnarray*}  
  \end{proposition}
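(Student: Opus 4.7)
The plan is to recognize $p_{y,x}(n)$ as a scaled Riemann sum of the function
$$F(u)\,=\,\Big|\sum_{y\le k\le x} a_k^2 \cos 2\pi j_k u\Big|$$
on the partition $\{j/n:0\le j\le n-1\}$ of $[0,1]$. Inequality (1) will then be a quantitative version of the standard Riemann-sum error estimate in terms of a Lipschitz modulus, while (2) will follow by combining (1) with a Cauchy--Schwarz / Parseval bound on $\int_0^1 F(u)\,\mathrm{d}u$. This is the effective, discrete counterpart of the mechanical-quadrature viewpoint recalled just before the statement via \eqref{mec.quad}.

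For (1), write $G(u)=\sum_{y\le k\le x} a_k^2 \cos 2\pi j_k u$. Term-wise differentiation gives $\|G'\|_\infty \le 2\pi \sum_{y\le k\le x} j_k a_k^2$, and by the reverse triangle inequality $||G(u)|-|G(v)||\le |G(u)-G(v)|$ the Lipschitz constant of $F=|G|$ is bounded by the same quantity. On each interval $[j/n,(j+1)/n]$ one has $|F(u)-F(j/n)|\le \mathrm{Lip}(F)/n$, so integrating and summing over $j=0,\dots,n-1$ yields
$$\Big|\int_0^1 F(u)\,\mathrm{d}u \,-\,\tfrac1n\sum_{j=0}^{n-1}F(j/n)\Big|\,\le\,\frac{2\pi}{n}\sum_{y\le k\le x}j_ka_k^2.$$
Multiplying both sides by $n/A(y,x)$ and recalling the definition \eqref{p(n).trigo} gives precisely the first assertion.

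For (2), estimate $\int_0^1 F(u)\,\mathrm{d}u$ from above by Cauchy--Schwarz and the orthogonality of the distinct characters $\cos 2\pi j_k u$ on $[0,1]$ (the $j_k$ are positive increasing integers, so each $\cos 2\pi j_k u$ has $L^2$-norm $1/\sqrt 2$ and distinct ones are orthogonal):
$$\int_0^1 F(u)\,\mathrm{d}u \,\le\, \Big(\int_0^1 G(u)^2\,\mathrm{d}u\Big)^{1/2}\,=\,\Big(\tfrac12\sum_{y\le k\le x}a_k^4\Big)^{1/2}\,\le\,\Big(\sum_{k\le x}a_k^4\Big)^{1/2}.$$
Substituting this bound into (1) and applying the triangle inequality produces the advertised upper estimate for $p_{y,x}(n)$.

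There is no real technical obstacle. The only subtlety is propagating the Lipschitz constant from $G$ to $F=|G|$ across the zero set of $G$, which is immediate from the reverse triangle inequality, and making sure the orthogonality relation is applied only to genuine characters $\cos 2\pi j_k u$ with $j_k\ge 1$, guaranteed by the hypothesis on $(j_k)$. Everything else is a one-line Riemann-sum bookkeeping and a Cauchy--Schwarz step.
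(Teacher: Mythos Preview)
Your proof is correct and follows essentially the same route as the paper: express $p_{y,x}(n)$ as $n/A(y,x)$ times a Riemann sum of $F=|G|$, control the Riemann-sum error by the Lipschitz constant of $F$ (propagated from $\|G'\|_\infty$ via the reverse triangle inequality), and then bound $\int_0^1 F$ by Cauchy--Schwarz and orthogonality of the cosines. Your treatment is in fact slightly more careful, making explicit the factor $1/2$ from $\|\cos 2\pi j_k\,\cdot\,\|_2^2$ and the passage from $\sum_{y\le k\le x}a_k^4$ to $\sum_{k\le x}a_k^4$.
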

   
\begin{proof}   Let $f(u)=|\sum_{y\le k\le x} a_k^2\cos 2\pi  j_ku|$ and  $R_nf=\frac{1}{n}\sum_{j=0}^{n-1} f(\frac{j}{n})$ be the Riemann sum  
  and $I(f)= \int_0^1 \big|\sum_{y\le k\le x} a_k^2\cos 2\pi  j_ku  \big|\dd u$.   We  express $p_{y,x}(n)$ as follows: 
\begin{eqnarray}
\label{dec.coeff.period.pol.}
p_{y,x}(n) &=& \frac{n R_n f}{A_x}  \,   \, =\, \frac{n I(f)}{A_x}+ \frac{n (R_n f-I(f))}{A_x}.
\end{eqnarray}  
 Let $\p(u)= \sum_{y\le k\le x} a_k^2\cos 2\pi  j_ku$, $f(u)=|\p(u)|$. 
    Let $R_nf=\frac{1}{n}\sum_{j=0}^{n-1} f(\frac{j}{n})$ be  the Riemann sum of order $n$ of $f$. 
Plainly,
\begin{eqnarray}\label{estp}\big|R_nf-\int_0^1 f(u)\dd u\big|&\le&\Big|\sum_{j=0}^{n-1}\int_{\frac{j}{n}}^{\frac{j+1}{n}}\big(f(u) -f(\frac{j}{n})\big) \dd u\Big|
\cr &\le&\sum_{j=0}^{n-1}\int_{\frac{j}{n}}^{\frac{j+1}{n}}\big|f(u) -f(\frac{j}{n})\big| \dd u 
\cr &\le&\sum_{j=0}^{n-1}\int_{\frac{j}{n}}^{\frac{j+1}{n}}\big|\p(u) -\p(\frac{j}{n})\big| \dd u \ \le \ \frac{\|\p'\|_{\infty}}{n} .\end{eqnarray}
  As $\p'(u)=  - 2\pi   \sum_{y\le k\le x} j_ka_k^2\sin 2\pi  j_ku$,  and so $\|\p'\|_{\infty}\le   2\pi  \sum_{k\le x} j_ka_k^2$, we deduce that  \begin{eqnarray}\label{estR}\Big|nR_nf-n\int_0^1 f(u)\dd u\Big|&\le&   2\pi \sum_{k\le x} j_ka_k^2 .\end{eqnarray}

And so, since $p_{y,x}(n)= \frac{nR_nf}{ A(y,x)  }$, 
 \begin{eqnarray}
  \label{estR.a}
 \Big|p_{y,x}(n)-\frac{n }{ A(y,x)  }\int_0^1 \Big|\sum_{y\le k\le x} a_k^2\cos  2\pi  j_ku\Big|\dd u\Big|&\le&   \frac{ 2\pi }{A(y,x) }  \ \sum_{y\le k\le x} j_ka_k^2 .\end{eqnarray}

By Cauchy-Schwarz's inequality, next using orthogonality of the system $\{\cos    2\pi  j_ku, k\le x\}$,
\begin{eqnarray*} \int_0^1 f(u)\dd u=\int_0^1 \Big|\sum_{y\le k\le x} a_k^2\cos  2\pi  j_ku\Big|\dd u &\le & \Big(\int_0^1 \Big|\sum_{y\le k\le x} a_k^2\cos    2\pi  j_ku\Big|^2 \dd u\Big)^{1/2}
\cr &\le &\Big(\sum_{y\le k\le x} a_k^4\Big)^{1/2}
.
\end{eqnarray*}
Consequently, by \eqref{estR.a},
 \begin{eqnarray*}
  p_{y,x}(n)&  \le & 
\frac{1}{A(y,x) }\Big\{n\Big(\sum_{y\le k\le x} a_k^4\Big)^{1/2}+ 2\pi \sum_{y\le k\le x} j_ka_k^2\Big\}.
  \end{eqnarray*}  
 \end{proof}

  \bigskip\par
 We  also need  the following  Proposition. \begin{proposition}\label{p(n)b}   Let $0<\e\le 1$,   $n\ge   1/\e$ and let $z=\big\lceil n\e\big\rceil$. 
\vskip 2 pt  {\rm (i)} For all $\Theta\ge 0$, and all $x\ge 1$,
\begin{eqnarray*}
\P\Big\{ \sup_{m=0,\ldots, z}X(\frac{m}{n})
 \, \le \, \Theta\Big\}
     &\le& e^{- \e\, \P\{X(0)>\Theta\}( n /p_{y,x}(n) )}.
     \end{eqnarray*}
   
   {\rm (ii)}   Take $n$ such that  $n\ge \max( 2\pi \sum_{y\le k\le x} j_ka_k^2, 1/\e)$. Then\begin{equation*}
\P\Big\{ \sup_{m=0,\ldots, z}X(\frac{m}{n})
 \, \le \, \Theta\Big\}
\,\le \, \exp\Big\{ \frac{- \, \e\, \P\{X(0)>\Theta\}   A(y,x)}{
\big((\sum_{y\le k\le x} a_k^4)^{1/2}+1\big)  }\Big\}
 .   \end{equation*}
    \end{proposition}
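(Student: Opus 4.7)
\textbf{Proof plan for Proposition \ref{p(n)b}.}

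The core idea is that, because $\widetilde X_{y,x}$ has integer frequencies $j_k$ and is $1$-periodic, the finite sample $\{\widetilde X_{y,x}(m/n): m\in\Z/n\Z\}$ is a \emph{cyclic} stationary centered Gaussian process on $\Z/n\Z$ in the sense of Klein--Landau--Shucker. Its covariance is
\[
\E\,\widetilde X_{y,x}(0)\widetilde X_{y,x}(m/n)=\sum_{y\le k\le x}a_k^2\cos(2\pi j_k m/n),\qquad \E\,\widetilde X_{y,x}(0)^2=A(y,x),
\]
so by definition of the decoupling coefficient in Theorem \ref{th3.KLS}, the coefficient of this cyclic process is exactly $p_{y,x}(n)$ given in \eqref{p(n).trigo}.

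For part (i), I would apply Theorem \ref{th3.KLS} to the product $\prod_{m\in\Z/n\Z}f_m(\widetilde X_{y,x}(m/n))$ with the choice
\[
f_m=\mathbf 1_{(-\infty,\Theta]}\text{ for }m=0,1,\ldots,z,\qquad f_m\equiv 1\text{ for }z<m<n.
\]
This gives
\[
\P\Big\{\sup_{m=0,\ldots,z}\widetilde X_{y,x}(m/n)\le\Theta\Big\}\le \bigl\|\mathbf 1_{\widetilde X_{y,x}(0)\le\Theta}\bigr\|_{p_{y,x}(n)}^{z+1}=\P\{\widetilde X_{y,x}(0)\le\Theta\}^{(z+1)/p_{y,x}(n)}.
\]
Using $1-u\le e^{-u}$ with $u=\P\{\widetilde X_{y,x}(0)>\Theta\}$, together with $z+1\ge n\e$ (from $z=\lceil n\e\rceil$ and $n\ge 1/\e$), produces the asserted bound $\exp\{-\e\,\P\{X(0)>\Theta\}(n/p_{y,x}(n))\}$.

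For part (ii), I would insert the upper bound
\[
p_{y,x}(n)\ \le\ \frac{1}{A(y,x)}\Big\{n\Big(\sum_{y\le k\le x}a_k^4\Big)^{1/2}+2\pi\sum_{y\le k\le x}j_ka_k^2\Big\}
\]
from Proposition \ref{p(n)a} into part (i). The hypothesis $n\ge 2\pi\sum_{y\le k\le x}j_ka_k^2$ makes the second term at most $n$, so
\[
\frac{n}{p_{y,x}(n)}\ \ge\ \frac{A(y,x)}{(\sum_{y\le k\le x}a_k^4)^{1/2}+1},
\]
and substituting this lower bound into the exponent of (i) yields the stated inequality. There is no genuine obstacle here: the only subtle point is the justification that the samples really form a \emph{cyclic} stationary Gaussian process (which rests on integrality of the $j_k$ combined with $1$-periodicity), and the bookkeeping with $z=\lceil n\e\rceil$; after those, everything is a direct application of Theorem \ref{th3.KLS} and the estimate from Proposition \ref{p(n)a}.
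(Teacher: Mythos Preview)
Your proposal is correct and follows essentially the same route as the paper: the paper also observes that $\{\widetilde X_{y,x}(m/n):m\in\Z/n\Z\}$ is cyclic stationary, applies Theorem~\ref{th3.KLS} with exactly your choice of indicator/constant functions $f_m$, converts via $\log(1-u)\le -u$, and for (ii) inserts the bound from Proposition~\ref{p(n)a} under the hypothesis $n\ge 2\pi\sum j_ka_k^2$ to obtain $n/p_{y,x}(n)\ge A(y,x)/((\sum a_k^4)^{1/2}+1)$. Your bookkeeping with $z+1$ indicator functions (indices $0,\ldots,z$) is in fact slightly tidier than the paper's, which uses only $z$; either way $z\ge n\e$ suffices.
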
 
Claim (i) shows that   the bound obtained is sharp when  $ p_{y,x}(n)\ll n  $. See \cite{W3} for examples.  \begin{proof}The map $m\mapsto\E \dot{X}(\bar{m})\dot{X}(\overline{m+j})=\E X( \frac{m}{n} )X(\frac{m+j}{n} )=\sum_{k\le x} a_k^2\cos2\pi j_k \frac{j}{n}$
being  constant over $\Z/n\Z$, for any $j\in\Z/n\Z$, the process $\dot{X}$ is thus a cyclic stationary Gaussian process on $\Z/n\Z$, with respect to the additive group structure of $\Z/n\Z$. 
 Thus Theorem \ref{th3.KLS} is in force.   Put  
\begin{equation} f_m =\begin{cases} \chi {\{]-\infty, \Theta]\}},  &\quad 1\le m\le z\cr
1&\quad z<m< n.
 \end{cases}\end{equation}

\vskip 4 pt
 
  We prove  (i). Applying Theorem \ref{th3.KLS} to   $f_m\big(\dot{X}(\bar{m})\big),\ m=0, 1, \ldots, n-1$, we obtain
\begin{eqnarray}\label{q1esta}
\P\Big\{ \sup_{m=0,\ldots, z}X(\frac{m}{n})
 \, \le \, \Theta\Big\}
 &=&  \P\Big\{ \sup_{m=0,\ldots, z}\dot{X}(\bar{m})\, \le \, \Theta\Big\}\,\le \,  \P\{\dot{X}(0)<\Theta\}^{z/p_{y,x}(n)}
\cr &= &   \P\{X(0)<\Theta\}^{z/p_{y,x}(n)}
  \ = \ \big(1- \P\{X(0)>\Theta\}\big)^{z/p_{y,x}(n)}
   \cr &= & e^{(z/p_{y,x}(n))\log(1- \P\{X(0)>\Theta\})}
  \ \le\ e^{- \P\{X(0)>\Theta\}{z/p_{y,x}(n)}}
  \cr &=& e^{- \e \P\{X(0)>\Theta\}( n /p_{y,x}(n) )}.\end{eqnarray}
 since $\log x \le x-1$ for all $x>0$.

  \vskip 4 pt
We now prove (ii).     At first, by   Proposition \ref{p(n)a},
 \begin{eqnarray*}
  p_{y,x}(n)\,  \le \, 
\frac{1}{A(y,x) }\Big\{n\Big(\sum_{y\le k\le x} a_k^4\Big)^{1/2}+ 2\pi \sum_{y\le k\le x} j_ka_k^2\Big\} .
  \end{eqnarray*}
Take $n$ such that  $n\ge 2\pi \sum_{y\le k\le x} j_ka_k^2$. Then  
$$ p_{y,x}(n)   \le   \frac{n}{A(y,x) }\big((\sum_{y\le k\le x} a_k^4)^{1/2}+1\big),$$ and so 
 
 \begin{equation}\label{a4.2.z}  
  \frac{n }{p_{y,x}(n) }\ge \frac{A(y,x)}{\big((\sum_{y\le k\le x} a_k^4)^{1/2}+1\big)}      .
  \end{equation}

\vskip 5 pt   
Under assumption  \eqref{A1.a.yx}, for some $0<\eta<1$,
$$\Big(\sum_{y\le k\le x} a_k^4\Big)^{1/2}\le \frac{A(y,x)^{1-\eta}}{\sqrt{\log A(y,x)}} ,$$ 
so that  the rigth-term in \eqref{a4.2.z} is large with $A(y,x)$. 
  It follows  from \eqref{a4.2.z} that,
 
  \begin{equation}\label{q1estb} 
 \P\Big\{ \sup_{j=0,\ldots, z}X\Big(\frac{j}{n} \Big)\,\le \,  \Theta\Big\} 
\ \le \ \exp\Big\{ {- \,\frac{\e\, \P\{X(0)>\Theta\}   A(y,x)}{
\big((\sum_{y\le k\le x} a_k^4)^{1/2}+1\big) }}\Big\}
 ,
   \end{equation}
 for all $\Theta\ge 0$, and all $x\ge 1$. 
 \end{proof} 
  \vskip 8 pt 
 \subsection{\bf Proof of  Theorem \ref{semi.asymp.bound.cor}}  Recall  that Mills'  ratio is  $R(x) =
 e^{x^2/2}\int_x^\infty e^{-t^2/2}\ \dd t$, and that  
 $  R(x)\ge \frac1{1+x}$,    $x\ge 0$, \cite[p.\,178]{Mi}. 
  Let 
 $\Theta = H\sqrt{A(y,x)} $, $H> 0$. 
 Then, noting that   $\E X(0)^2=A(y,x)$,
    $$\P\{X(0)>\Theta\}= \P\Big\{\frac{X(0)}{\|X(0)\|_2}>\frac{\Theta}{\sqrt{A(y,x)}}\Big\}=  e^{-H^2/2}\frac{ R(H)}{\sqrt {2\pi}}\ge  \frac{e^{-H^2/2}}{\sqrt {2\pi}( H+1)}.$$
 Let $0<\eta<1$ be defined according to  assumption   \eqref{A1.a.yx}. Choose 
  $ H=\sqrt{2\eta \log A(y,x)}$.  
 Then
 \begin{eqnarray*}
  \P\{X(0)>\Theta\}A(y,x)&\ge &  C\ \frac{e^{-H^2/2} A(y,x)}{H}\ =\ C\,\frac{ A(y,x)^{1-\eta}}{\sqrt{2\eta \log A(y,x)}}.
  \end{eqnarray*}
  
 Let $0<\e\le 1$  and $n$ such that  $n\ge \max( 2\pi \sum_{y\le k\le x} j_ka_k^2, 1/\e)$, $z= \lceil n\e \rceil$.  As $\Theta = \sqrt{2\eta A(y,x)\log A(y,x)}$, it follows from Proposition \ref{p(n)b}-(ii) that 
 \begin{align*}
 \P\Big\{ \sup_{j=0,\ldots, z}X\Big(\frac{j}{n} \Big)\,\le \,  \Theta\Big\} \ \le  \ e^{-\,\frac{C \e\,A(y,x)^{1-\eta}}{  \sqrt{ \eta  ((\sum_{y\le k\le x} a_k^4)^{1/2}+1 )\log A(y,x)}}}.
 \end{align*}

 If $B=\sum_{k\ge 1} a_k^4<\infty$, then
 \begin{align*}
 \P\Big\{ \sup_{j=0,\ldots, z}X\Big(\frac{j}{n} \Big)\,\le \,  \Theta\Big\} \ \le  \ e^{-\,\frac{C  \e\,  A(y,x)^{1-\eta}}{  \sqrt{ \eta (B+1)\log A(y,x)}}}.
 \end{align*}
As
$$\P\Big\{ \sup_{0\le t\le \e}\widetilde X_{y,x}(t)\, \le \, \Theta\Big\}\,\le \,  \P\Big\{ \sup_{j=0,\ldots, z}X\Big(\frac{j}{n} \Big)\,\le \,  \Theta\Big\},$$
we get 
$$\P\Big\{ \sup_{0\le t\le \e}\widetilde X_{y,x}(t)\, \le \, \sqrt{2\eta A(y,x)\log A(y,x)}\Big\}\,\le \,    e^{-\,\frac{C \e\, A(y,x)^{1-\eta}}{  \sqrt{ \eta (B+1)\log A(y,x)}}}.$$
The case $B=\sum_{k\ge 1} a_k^4<\infty$ follows identically. 

Now let $\eta=1$. Choose 
again $\Theta = H\sqrt{A(y,x)}   $ but with  $ H=\sqrt{2  \log (A(y,x)/V(y,x)) }$.  
Then
 \begin{eqnarray*}
  \P\{X(0)>\Theta\}A(y,x)&\ge &  C\ \frac{e^{-H^2/2} A(y,x)}{H}\ =\ C\,\frac{V(y,x)}{\sqrt{2 
   \log ( {A(y,x)}/{V(y,x)}) }},  \end{eqnarray*}
  and thus
   \begin{align*}
  \P\Big\{ \sup_{j=0,\ldots, z}X\Big(\frac{j}{n} \Big)\,\le \,  \sqrt{2A(y,x)   \log ( {A(y,x)}/{V(y,x)}) }\Big\} \ \le  \ e^{-\,\frac{C \e\,V(y,x)}{  \sqrt{   ((\sum_{y\le k\le x} a_k^4)^{1/2}+1 ) \log ( {A(y,x)}/{V(y,x)})}}}.
 \end{align*}

We get 
$$\P\Big\{ \sup_{0\le t\le \e}\widetilde X_{y,x}(t)\, \le \,  \sqrt{2A(y,x)   \log \Big( \frac{A(y,x)} {V(y,x)}\Big) }\Big\}\,\le \,    e^{-\,\frac{C \e\,V(y,x)}{  \sqrt{   ((\sum_{y\le k\le x} a_k^4)^{1/2}+1 ) \log ( {A(y,x)}/{V(y,x)})}}}.$$

  This achieves the proof.


  
\section{\bf Proof  of Theorem \ref{sup.kappa.th}.}\label{s4}   

   \vskip 2 pt 
   

 \subsection{Controlling approximation} We begin with explaining   the method used. For any  real $U\ge 1$,   \begin{equation*}   \sup_{1\le u\le U} X_{y,x}(u)\ge  \sup_{1\le u\le U} X^\perp_{y,x}(u) -
 \sup_{1\le u\le U} \big|X_{y,x}(u)-  X^\perp_{y,x}(u)\big|
.
  \end{equation*} 
  Let $\Theta_{y,x} >h>0$. We deduce that
 \begin{align}\label{control1}  \P\Big\{   \sup_{1\le u\le U} &X_{y,x}(u)\le \Theta_{y,x} -h  \Big\}      
   \cr &\ \le\P\Big\{ \sup_{1\le u\le U} X^\perp_{y,x}(u) \le \Theta_{y,x}   \Big\}+\P\Big\{     \sup_{1\le u\le U }  \big|X_{y,x}(u)-X^\perp_{y,x}(u)\big|    > h   
   \Big\},
  \end{align}
and thus have to bound from above   the supremum,
$$\sup_{1\le u\le U} \big|X_{y,x}(u)-  X^\perp_{y,x}(u)\big|. $$ 
 We sieve $u$ with the test sequence $\{N_\k, \k\ge 1\}$.   Let $u$ be such that 
  $$ N_{\k-1}\le u< N_\k  .$$ 

  We  study local extrema
$$\sup_{N_{\k-1}\le u\le N_\k}   \big|X_{y,x}(u)- X^\perp_{ y,x}(u)\big|$$
restricted to those $\k$ such that $N_\k \le U$.  
  Note    (see \eqref{sup.G1} in Remark \ref{Orlicz}) that  \begin{align}\label{bound.diff.X.Xperp.1a}\E \sup_{1\le N_\k \le U }& \sup_{N_{\k-1} \le u\le N_\k }\big|X_{y,x}(u)- X^\perp_{y,x}(u)\big|
\cr & \le \ C\, \sqrt{\log  \k([1,U]) }\sup_{1\le N_\k \le U }\E  \sup_{N_{\k-1} \le u\le N_\k }\big|X_{y,x}(u)-X^\perp_{y,x}(u)\big|.
\end{align}
 Therefore it   suffices  to bound  for each $1\le N_\k\le U $,
$$\E  \sup_{N_{\k-1} \le u\le N_\k }\big|X_{y,x}(u)-X^\perp_{y,x}(u)\big|.$$
For controlling the probability
$$\P\Big\{     \sup_{1\le u\le U }  \big|X_{y,x}(u)-X^\perp_{y,x}(u)\big|    > h   
   \Big\}$$
   the following inequality (Lemma \ref{si.sn}) is  appropriate: if $N(X)$ is a Gaussian  semi-norm such that $\P\{ N(X)<\infty\}>0$, then $\E N(X)<\infty$, and in fact there exists an absolute constant $K$ such that 
 $$\E  \exp\Big\{ {N(X)^2\over K(\E N(X))^2}\Big\}\le 2.$$ 

\subsection{An intermediate approximation result.}   We prove   \begin{theorem}\label{bound1}  Assume that \eqref{Nk.k} holds.
 \vskip 3 pt 

{\rm (1)} Let  $1\le y\le U$.     We have for     $x>y    $,  
 \begin{align*}
\E \sup_{1\le u\le U} \big|X_{y,x}(u)-  &X^\perp_{y,x}(u)\big|     \le C E\,\sqrt{\log \k ([y, U])}  , \end{align*}
  where,
     \begin{eqnarray*}  E\,&=&    y  \, \Big( \sum_{   y\le k\le x}\frac{1 }{N^2_k  }\Big)^{1/2}\Big( \sum_{   y\le k\le x}a_k^2\Big)^{1/2} +
             \sum_{ y\le  k<  \k\atop N_\k \le U}   |a_k|
  \cr &  &   +   \sup_{\k:\,y\le N_\k\le U }\ N_\k  \Big( \sum_{   \k\le k\le x} \frac{  1}{N_k^{2 } }      \Big)^{1/2}\,\Big(\sum_{   \k\le k\le x}   |a_k|^2    \Big)^{1/2}
.
 \end{eqnarray*}

  {\rm (2)} Let $1\le U\le y$. Then \begin{equation*}\E \sup_{1\le u\le U} \big|X_{y,x}(u)-   X^\perp_{y,x}(u)\big|     \le C E'\,\sqrt{\log \k([1, U])} 
    ,
   \end{equation*} 
with 
 \begin{eqnarray*} E\,'&=&\,U \Big( \sum_{   y\le k\le x}\frac{1 }{N^2_k  }\Big)^{1/2}\Big( \sum_{   y\le k\le x}a_k^2\Big)^{1/2} .
 \end{eqnarray*} \end{theorem}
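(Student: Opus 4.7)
Set
\begin{equation*}
D(u) \ = \ X_{y,x}(u) - X^\perp_{y,x}(u) \ = \ \sum_{y\le k\le x} a_k\bigl(g_k[\cos L_k u - \cos \ell(k)u] + g'_k[\sin L_k u - \sin \ell(k)u]\bigr).
\end{equation*}
The plan is to bound $\E\sup_{1\le u\le U}|D(u)|$ by combining a Lipschitz/Cauchy--Schwarz estimate on short scales, an index split at $k=\k$ on intermediate scales, and a Fernique-type maximal inequality for Gaussian semi-norms over the sieve cells $[N_{\k-1},N_\k]$. Part (2) will fall out of the Lipschitz step alone; part (1) will require both steps and the maximal inequality.

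The starting pointwise estimates $|\cos A-\cos B|,|\sin A-\sin B|\le \min(2,|A-B|)$ combined with \eqref{approxLk} give
\begin{equation*}
|D(u)|\ \le \ 2\sum_{y\le k\le x}|a_k|\min\!\Bigl(2,\frac{u}{N_k}\Bigr)(|g_k|+|g'_k|),
\end{equation*}
so that, using $\E|g_k|,\E|g'_k|\le C$,
\begin{equation*}
\E\sup_{1\le u\le u_0}|D(u)|\ \le \ C\sum_{y\le k\le x}|a_k|\min\!\Bigl(2,\frac{u_0}{N_k}\Bigr).
\end{equation*}
For part (2), assumption \eqref{Nk.k} forces $N_k\ge k\ge y\ge U$ for every summation index $k\ge y$, hence the minimum equals $U/N_k$; one Cauchy--Schwarz step then delivers precisely $E'=U(\sum 1/N_k^2)^{1/2}(\sum a_k^2)^{1/2}$. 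For part (1), the same step applied with $u_0=y$ simultaneously handles $u\in[1,y]$ and produces the first summand of $E$.

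To treat the range $u\in[y,U]$ in part~(1), I use the sieve $\{N_\k\}$: writing $\k_U=\max\{\k:N_\k\le U\}$, the segment $[y,U]$ is covered by the $\k([y,U])$ cells $[N_{\k-1},N_\k]$. On a given cell, the monotonicity of $(N_k)$ yields $\min(2,u/N_k)\le 2$ for $k<\k$ and $\min(2,u/N_k)\le N_\k/N_k$ for $k\ge\k$. A further Cauchy--Schwarz application to the second block gives
\begin{equation*}
\E\sup_{N_{\k-1}\le u\le N_\k}|D(u)|\ \le\ C\Bigl(\sum_{y\le k<\k}|a_k|\ +\ N_\k\bigl(\sum_{k\ge\k}1/N_k^2\bigr)^{1/2}\bigl(\sum_{k\ge\k}a_k^2\bigr)^{1/2}\Bigr),
\end{equation*}
which, after taking the supremum over admissible $\k$, matches the last two summands of $E$.

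The step I expect to be the most delicate is the passage from these cell-wise estimates to a global bound on $\sup_{y\le u\le U}|D(u)|$, because the variables $Y_\k:=\sup_{N_{\k-1}\le u\le N_\k}|D(u)|$ are Gaussian semi-norms of one and the same underlying Gaussian family $\{g_k,g'_k\}$, hence are strongly dependent; no independence argument applies. I would invoke Lemma \ref{si.sn} (Fernique's concentration principle for Gaussian semi-norms), which implies that each $Y_\k$ is subgaussian with $\psi_2$-norm controlled by $\E Y_\k$. The classical subgaussian maximal inequality then produces
\begin{equation*}
\E\max_{\k:\,y\le N_\k\le U}Y_\k\ \le\ C\,\sqrt{\log \k([y,U])}\,\max_{\k:\,y\le N_\k\le U}\E Y_\k.
\end{equation*}
Inserting the cell-wise bound and combining with the $[1,y]$-estimate completes the proof of part~(1), while part~(2) follows directly from the pointwise Lipschitz step.
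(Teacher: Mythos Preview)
Your proposal is correct and follows essentially the same route as the paper: the same pointwise Lipschitz bound $|D(u)|\le C\sum_{y\le k\le x}|a_k|(|g_k|+|g'_k|)\min(1,u/N_k)$, the same cell-wise splitting at $k=\k$ with Cauchy--Schwarz on the tail block, and the same passage to the global supremum via the subgaussian maximal inequality for Gaussian semi-norms (the paper derives this as \eqref{sup.G1} in Remark~\ref{Orlicz} from Lemma~\ref{si.sn}, exactly as you anticipate). The only cosmetic difference is that you handle the sub-range $[1,y]$ (and all of part~(2)) in one stroke via $u_0=y$ (resp.\ $u_0=U$), whereas the paper still chops these ranges into cells before applying the maximal inequality; your variant is slightly cleaner and in fact yields those pieces without the extraneous $\sqrt{\log\k}$ factor.
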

\begin{proof}  
Using the elementary bound  $\max\big(|\cos a -\cos b|, |\sin a -\sin b|\big)\le 2|\sin\big(\frac{a-b}{2}\big)|\le 2\min(1,  |a-b|  )$,  and   \eqref{approxLk}     we get,
\begin{eqnarray}\label{start}&&\big|X_{y,x}(u)- X^\perp_{ y,x}(u)\big|
\cr &\le& \sum_{y\le k\le x} |a_k| \Big( |g_k| \big|\cos L_ku-\cos ( \ell( k)   u)\big|+ |g'_k|\big|\sin L_k u-\sin  ( \ell( k)   u)\big|\Big)
\cr &\le& 2\sum_{y\le k\le x} |a_k| \big( |g_k|  + |g'_k| \big)\big|\sin \frac12\big(L_k  -\ell( k) \big)   u\big|  
\cr &\le&  \sum_{y\le k\le x} |a_k| \big( |g_k|  + |g'_k| \big)\min\big(1, |L_k  -\ell( k)|u \big)
\cr &\le&  \sum_{y\le k\le x} |a_k| \big( |g_k|  + |g'_k| \big)\min\big(1, \frac{|u|}{N_k} \big).\end{eqnarray}
Let $\xi_k=|g_k|    + |g'_k|$, $k\ge 1$.  Let  $\k\ge 2$ and   $ N_{\k-1}\le u\le N_\k  $. We deduce,
      \begin{eqnarray}\label{basic.bound.diff.X.Xperp}  \sup_{u\in [N_{\k-1}, N_{\k } [}\big|X_{y,x}(u)-X^\perp_{y,x}(u)\big| 
&\le&  C  \sum_{ y\le  k  \le x}  |a_k| \min\Big(1, \frac{N_\k}{N_k  } \Big) \xi_k
.   \end{eqnarray}
    
    Consider two cases. 
 $$ \hbox{(A) : ($1\le y\le U$)} \qq\qquad \hbox{(B) : ($1\le U\le y$)}$$
    \vskip 5 pt \underline{Case} (A): 
    \vskip 3 pt  (i) 
   Let     $u\in [N_{\k-1}, N_\k[\subset [y, U]$. 
 We split the summation interval $[y, x]$ in two sub-intervals.  
  We operate a ceasura at height $\kappa$, writing  $\sum_{ y\le  k  \le x}=\sum_{ y\le  k<  \k}+ \sum_{   \k\le k\le x}$. 
   Then by   \eqref{basic.bound.diff.X.Xperp},
 \begin{equation}\label{bound.diff.X.Xperp.a}\sup_{N_{\k-1} \le u\le N_\k }\big|X_{y,x}(u)-X^\perp_{y,x}(u)\big| 
  \,\le\,     C \Big( \sum_{ y\le  k<  \k}   |a_k| \xi_k +\sum_{   \k\le k\le x}   |a_k|    \Big(\frac{N_\k}{N_k  }\Big) \xi_k \Big) .
   \end{equation}
         
   By applying Cauchy-Schwarz's inequality, the second sum in the right-term of  \eqref{bound.diff.X.Xperp.a} is bounded as follows  
  \begin{eqnarray*}\label{bound.diff.X.Xperp.} 
   \sum_{   \k\le k\le x}  |a_k|   \Big(\frac{N_\k}{N_k  }\Big) \xi_k  
       &\le& 
       C N_\k \,\Big(\sum_{   \k\le k\le x}   |a_k|^2    \Big)^{1/2}\Big(\sum_{   \k\le k\le x} \frac{  \xi_k^2}{N_k^{2  } }      \Big)^{1/2}.
   \end{eqnarray*}
   Thus 
\begin{eqnarray*} \E \sum_{   \k\le k\le x}  |a_k| \Big(\frac{N_\k}{N_k  }\Big) \xi_k
& \le& 
 C N_\k\,\Big(\sum_{   \k\le k\le x}   |a_k|^2      \Big)^{1/2}\E\Big(\sum_{   \k\le k\le x} \frac{  \xi_k^2}{N_k^{2  } }      \Big)^{1/2}
 \cr & \le&
   C N_\k\,\Big(\sum_{   \k\le k\le x}  |a_k|^2      \Big)^{1/2} \Big(\E\sum_{   \k\le k\le x} \frac{  \xi_k^2}{N_k^{2 } }      \Big)^{1/2}
 \cr & \le&
  C N_\k\,\Big(\sum_{   \k\le k\le x}   |a_k|^2    \Big)^{1/2} \Big( \sum_{   \k\le k\le x} \frac{  1}{N_k^{2 } }      \Big)^{1/2}
. 
 \end{eqnarray*}

   As to the first sum, we have
$$  \E\sum_{ y\le  k<  \k}   |a_k| \xi_k
\le C   \sum_{ y\le  k<  \k\atop N_\k \le U}   |a_k| .$$
So that, 
\begin{align*} 
\E  \sup_{N_{\k-1}\le u\le N_\k}\big|&X_{y,x}(u)- X^\perp_{y,x}(u)\big| 
  \cr \le&   \ C    \sum_{ y\le  k<  \k\atop N_\k \le U}    |a_k|+ C N_\k\,\Big(\sum_{   \k\le k\le x}   |a_k|^2     \Big)^{1/2} \Big( \sum_{   \k\le k\le x} \frac{  1}{N_k^{2 } }      \Big)^{1/2}
.
   \end{align*}
 
 Hence,
  \begin{align}\label{ }
   &\sup_{\k:\atop   [N_{\k-1},N_\k[\subset [y, U] } \E  \sup_{N_{\k-1}\le u\le N_\k}\big| X_{y,x}(u)- X^\perp_{y,x}(u)\big| 
   \cr\, \le&
     \ C      \sum_{ y\le  k<  \k\atop N_\k \le U}   |a_k|+ C \sup_{\k:\,y\le N_\k\le U }\ N_\k  \Big( \sum_{   \k\le k\le x} \frac{  1}{N_k^{2 } }      \Big)^{1/2}\,\Big(\sum_{   \k\le k\le x}   |a_k|^2    \Big)^{1/2}. 
  \end{align}
 \begin{remark}\label{Nk.exp} If $N_k=2^k$, $k\ge 2$, then
  $$N_\k  \Big( \sum_{   \k\le k\le x} \frac{  1}{N_k^{2 } }      \Big)^{1/2}= 2^\k  \Big( \sum_{   \k\le k\le x} 2^{-2k}     \Big)^{1/2}=\mathcal O(1).$$
 \end{remark}

 \vskip 5 pt   (ii): Let   $u\in [N_{\k-1}, N_\k[\subset [1,  y ]$. We use assumption \eqref{Nk.k}, namely $N_k\ge k$ for each $k$. Then for $k\ge y$,
  $$N_\k\le y\le k\le N_k.$$
  By \eqref{basic.bound.diff.X.Xperp}   \begin{eqnarray}\label{bound.diff.X.Xperp.kappa.le.y}\big|X_{y,x}(u)-X^\perp_{y,x}(u)\big| 
  \, \le \,C\,u     \sum_{   y\le k\le x}       \Big(\frac{|a_k| }{N_k  } \Big) \xi_k .
   \end{eqnarray}
Thus, \begin{eqnarray*} \E \sup_{u\in [N_{\k-1}, N_\k[  }\big|X_{y,x}(u)-X^\perp_{y,x}(u)\big| 
&   \le  &   C\,  N_\k \sum_{   y\le k\le x}     \Big(\frac{ |a_k| }{N_k  } \Big) \, \E\xi_k 
\cr &\le & C\,  N_\k \Big( \sum_{   y\le k\le x}\frac{1 }{N^2_k  }\Big)^{1/2}\Big( \sum_{   y\le k\le x}a_k^2\Big)^{1/2}   ,
   \end{eqnarray*}
  and
\begin{equation*} \sup_{ [N_{\k-1},N_\k[\subset [1,y]  }\E\sup_{N_{\k-1} \le u\le N_\k }\big|X_{y,x}(u)-X^\perp_{y,x}(u)\big| 
   \le     C\,y \Big( \sum_{   y\le k\le x}\frac{1 }{N^2_k  }\Big)^{1/2}\Big( \sum_{   y\le k\le x}a_k^2\Big)^{1/2}     .
   \end{equation*}

  Consequently,
\begin{eqnarray}
&&\sup_{1\le N_\k \le U }\E  \sup_{N_{\k-1} \le u\le N_\k }\big|X_{y,x}(u)-X^\perp_{y,x}(u)\big|
\cr &\le &
 \Big(  \sup_{[N_{\k-1},N_\k[\subset [1,y] }+ \sup_{[N_{\k-1},N_\k[\subset [y,U]}\Big) \E \sup_{N_{\k-1} \le u\le N_\k }\E \big|X_{y,x}(u)-X^\perp_{y,x}(u)\big|
 \cr &\le &    C\,y \Big( \sum_{   y\le k\le x}\frac{1 }{N^2_k  }\Big)^{1/2}\Big( \sum_{   y\le k\le x}a_k^2\Big)^{1/2} +
       C     \sum_{ y\le  k<  \k\atop N_\k \le U}   |a_k|
  \cr &  &   + C \sup_{\k:\,y\le N_\k\le U }\ N_\k  \Big( \sum_{   \k\le k\le x} \frac{  1}{N_k^{2 } }      \Big)^{1/2}\,\Big(\sum_{   \k\le k\le x}   |a_k|^2    \Big)^{1/2}
. \end{eqnarray} 
In view of \eqref{bound.diff.X.Xperp.1a}, we deduce that   \begin{align*}
\E \sup_{1\le u\le U} \big|X_{y,x}(u)-  &X^\perp_{y,x}(u)\big|     \le C E\,\sqrt{\log \k([1,U])}  , \end{align*}
  with 
 \begin{eqnarray*}  E\,&=&    y  \, \Big( \sum_{   y\le k\le x}\frac{1 }{N^2_k  }\Big)^{1/2}\Big( \sum_{   y\le k\le x}a_k^2\Big)^{1/2} +
             \sum_{ y\le  k<  \k\atop N_\k \le U}   |a_k|
  \cr &  &   +   \sup_{\k:\,y\le N_\k\le U }\ N_\k  \Big( \sum_{   \k\le k\le x} \frac{  1}{N_k^{2 } }      \Big)^{1/2}\,\Big(\sum_{   \k\le k\le x}   |a_k|^2    \Big)^{1/2}
.
 \end{eqnarray*}

 \vskip 5 pt \underline{Case} (B):    
Let $u\in [N_{\k-1}, N_\k[\subset [1, U]$. 
 By \eqref{basic.bound.diff.X.Xperp},   \begin{eqnarray*} \big|X_{y,x}(u)-X^\perp_{y,x}(u)\big| 
  \, \le \,C  \sum_{ y\le  k  \le x}  |a_k| \min\Big(1, \frac{ u}{N_k  } \Big) \xi_k
   \, \le \,C \, N_\k\sum_{ y\le  k  \le x}    \frac{ |a_k|}{N_k  }  \, \xi_k .
   \end{eqnarray*}

Thus, \begin{eqnarray*} \E \sup_{N_{\k-1} \le u\le N_\k }\big|X_{y,x}(u)-X^\perp_{y,x}(u)\big| 
&   \le  &   C\, N_\k \sum_{   y\le k\le x}     \Big(\frac{ |a_k| }{N_k  } \Big) \, \E\xi_k 
\cr &\le & C\,  N_\k \Big( \sum_{   y\le k\le x}\frac{1 }{N^2_k  }\Big)^{1/2}\Big( \sum_{   y\le k\le x}a_k^2\Big)^{1/2}   ,
   \end{eqnarray*}
 and
\begin{equation*} \sup_{ [N_{\k-1},N_\k[\subset [1,U]  }\E\sup_{N_{\k-1} \le u\le N_\k }\big|X_{y,x}(u)-X^\perp_{y,x}(u)\big| 
   \le     C\,U \Big( \sum_{   y\le k\le x}\frac{1 }{N^2_k  }\Big)^{1/2}\Big( \sum_{   y\le k\le x}a_k^2\Big)^{1/2}     .
   \end{equation*}  
 If $N_k=2^k$, $k\ge 2$, then
  $$U \Big( \sum_{   y\le k\le x}\frac{1 }{N^2_k  }\Big)^{1/2}= U  \Big( \sum_{  y\le k\le x} 2^{-2k}     \Big)^{1/2}= \mathcal O(U2^{-y})= \mathcal O(1),$$
 since $U\le y$.
 In view of \eqref{bound.diff.X.Xperp.1a} again, it follows that  we deduce that    
 \begin{equation*}\E \sup_{1\le u\le U} \big|X_{y,x}(u)-   X^\perp_{y,x}(u)\big|     \le C E'\, \sqrt{\log  \k([1,U])} 
    ,
   \end{equation*} 
with \ben\label{}E'&=&\,U \Big( \sum_{   y\le k\le x}\frac{1 }{N^2_k  }\Big)^{1/2}\Big( \sum_{   y\le k\le x}a_k^2\Big)^{1/2} .
\een

  This proves Theorem \ref{bound1}.
\end{proof}\bigskip \par 

 \subsection{Gauss\-ian semi-norms}   We use strong integrability properties  of Gauss\-ian semi-norms.  
See 
for instance \cite{We},  Theorem 10.2.2   for a proof.  This one relies upon  rotational invariance property of Gaussian laws. We refer  the interested reader to the pages 497--500 of \cite{We}   for a detailed   discussion.
 \begin{lemma}\label{si.sn} Let $(E, \mathcal{ E})$ be a measurable vector space. Let $(\O,\mathcal{B},\P)$ be a
probability space. Consider a Gauss\-ian vector $X\colon (\O,\mathcal{B},\P)\to (E,
\mathcal{ E})$. Let $N=(E, \mathcal{ E})\to \R^+$ be a measurable semi-norm
on $E$ and assume that $\P\{ N(X)<\infty\}>0$. Then $\E N(X)<\infty$
and   there exists an absolute constant $K$ such that 
\begin{equation}\label{strong.i.G} \E  \exp\Big\{ {N(X)^2\over K(\E N(X))^2}\Big\}\le 2.  
 \end{equation}
Further the moments of  $N(X)$ are all equivalent.
\end{lemma}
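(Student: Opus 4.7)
The plan is to implement Fernique's classical symmetrization argument based on the rotation invariance of the joint law of two independent Gaussian copies. Let $X'$ be an independent copy of $X$. Since the $E\times E$-valued vector $(X,X')$ is Gaussian, it is invariant in law under any orthogonal linear transformation of its two coordinates; in particular
\[
(Y_1,Y_2) := \Big(\frac{X-X'}{\sqrt{2}},\,\frac{X+X'}{\sqrt{2}}\Big)
\]
is again a pair of independent copies of $X$. Writing $X=(Y_1+Y_2)/\sqrt 2$ and $X'=(Y_2-Y_1)/\sqrt 2$ and applying the triangle inequality for $N$ twice, one checks that for any $0<s<t$ the event $\{N(X)\le s\}\cap\{N(X')>t\}$ forces $N(Y_1),N(Y_2)\ge(t-s)/\sqrt 2$, so by independence
\begin{equation*}
\P\{N(X)\le s\}\,\P\{N(X)>t\}\le \P\{N(X)>(t-s)/\sqrt 2\}^2.
\end{equation*}

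The assumption $\P\{N(X)<\infty\}>0$ combined with countable additivity yields some $s_0$ with $\alpha_0:=\P\{N(X)\le s_0\}>0$. Defining $t_0=s_0$ and $t_{n+1}=s_0+\sqrt{2}\,t_n$, iteration of the functional inequality gives (up to a harmless polynomial prefactor) $\P\{N(X)>t_n\}\le \alpha_0^{-1}\bigl(\alpha_0^{-1}\P\{N(X)>t_0\}\bigr)^{2^n}$. Since $t_n\le C\,s_0(\sqrt 2)^n$, interpolating between the $t_n$ by monotonicity provides a Gaussian tail
\begin{equation*}
\P\{N(X)>t\}\le C_1(\alpha_0)\,e^{-c_1(\alpha_0)\,t^2/s_0^2},\qquad t\ge s_0.
\end{equation*}
Consequently $N(X)<\infty$ almost surely, all moments of $N(X)$ are finite, and $\E\exp(\lambda N(X)^2)<\infty$ for some $\lambda>0$. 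The moment-equivalence assertion follows immediately from this sub-Gaussian tail by standard $L^p$ comparison.

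It remains to exhibit the clean normalization by $\E N(X)$ with a universal $K$. For this, apply Markov's inequality to the now-finite expectation: choosing $s_0:=2\,\E N(X)$ forces $\alpha_0\ge 1/2$, so every constant in the iteration becomes genuinely absolute. Repeating the step above with this specific $s_0$ yields $\P\{N(X)>t\}\le C\,e^{-c t^2/(\E N(X))^2}$ with universal $C$ and $c$, and integrating the exponential tail against $2\,t\,dt$ calibrates a universal $K$ such that $\E\exp\bigl(N(X)^2/(K(\E N(X))^2)\bigr)\le 2$.

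The main obstacle is precisely this bootstrap: one cannot invoke $\E N(X)$ as a rescaling parameter until its finiteness has been secured by a first, non-quantitative pass of the iteration with an arbitrary $s_0$. Careful bookkeeping of how $\alpha_0$ and $s_0$ enter the tail constants in the first pass is essential, so that the second pass—with threshold $2\,\E N(X)$ and $\alpha_0\ge 1/2$—produces constants that no longer depend on anything but the law-theoretic quantity $\E N(X)$.
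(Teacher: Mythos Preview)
The paper does not actually prove this lemma; it cites \cite{We}, Theorem~10.2.2, and remarks only that the proof ``relies upon rotational invariance property of Gaussian laws.'' Your Fernique-type argument is exactly that approach, so the methods coincide.

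One point in your first pass needs attention. From $\P\{N(X)<\infty\}>0$ you only secure some $s_0$ with $\alpha_0>0$, but the iteration $q_{n+1}\le q_n^{2}/\alpha_0$ decays to zero only when $q_0/\alpha_0<1$, i.e.\ when $\alpha_0>1/2$. To bridge this gap you must invoke the Gaussian zero--one law: the set $\{N<\infty\}$ is a measurable linear subspace, hence has Gaussian measure $0$ or $1$, so the hypothesis $\P\{N(X)<\infty\}>0$ upgrades automatically to $\P\{N(X)<\infty\}=1$, and you may then choose $s_0$ with $\alpha_0$ as close to $1$ as you like. With this in hand your two-pass scheme---a non-quantitative first pass to obtain $\E N(X)<\infty$, then a quantitative second pass with $s_0$ a fixed multiple of $\E N(X)$---is correct and yields the absolute constant $K$. (In the second pass take, say, $s_0=4\,\E N(X)$ rather than $2\,\E N(X)$, so that Markov gives $q_0\le 1/4<\alpha_0$ with room to spare.)
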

  \begin{remark}[Orlicz's like inequalities]\label{Orlicz}Let $L^G(\P)\subset L^0(\P)$ be the Orlicz space associated to the Young function $G(t)=\exp(t^2)-1$, 
 with associated Orlicz's norm
 $\|f\|_G =\inf\{\alpha >0\!:\!
 \E
G(|f|/\alpha)\le 1\}$. By   Lemma \ref{si.sn},
$\|f\|_G \le K \E f$, if $f=N(X)$. As $G(t)\ge t^2/2$, letting $L>0$, for $f  \in L^G(\P)$, $$\E
G( | f  |/L\E | f  |)\ge (1/2)\E (| f  |/L\E | f  |)^2= (1/2)  (\E | f  |^2/(L\E| f  |)^2 >2,$$
if $L$ is a small enough (absolute) constant. So that $\|f\|_G \ge K' \E f$. Thus there exist positive absolute constants $K_1   \le K_2$, such that  as soon as $\P\{ N(X)<\infty\}>0$,\begin{equation}\label{G} K_1\,\E N(X) \le \|N(X) \|_G \le K_2\, \E N(X).
 \end{equation}
 
 We use the following inequality: let $f_j\in L^G(\P)$, $j=1,\ldots,n$, then 
\begin{equation}\label{sup.G}   \big\|\sup_{1\le j\le n} |f_j|  \big\|_G \le 
 \left(\d \log n\right)^{1/ 2} 
 \sup_{1\le j\le n} \|  f_j    \|_G ,
 \end{equation}
where $\d= {2/ \log\, 2}$. 
Indeed there is no loss to assume  $\|f_i\|_{G}\le 1,$ for $j=1,\ldots,n$. Since $\d  \log \, n\ge 1$,  we find by applying  Jensen's inequality:
 $$
 \E \exp\Bigl(  {1\over (\d \log  n) }. \sup_{1\le j\le
 n}|f_j|^2\Bigr)   \le \Bigl(\E \exp\Bigl(  \sup_{1\le j\le
 n}|f_j|^2\Bigr)  \Bigr)^{  {1\over (\d.\log  n)
 }}
  $$$$ \le \Bigl(\sum_{1\le j\le
 n}\E \exp (  |f_j|^2 )   \Bigr)^{  {1\over (\d \log  n)
 }} \le  (2n)^{  {1\over (\d \log  n)
 }}=\exp\bigl( {\log (2n)\over 2\log n}.\log2\bigr)\le 2. $$ 
 Thus \eqref{sup.G} follows from the definition of the  Orlicz's norm  $\|.\|_{G}$. Further \eqref{G} and \eqref{sup.G} imply that there exists a positive absolute constant  $K_3 $, such that   if $X_1,\ldots, X_n$ are Gaussian vectors satisfying  $\P\{ N(X_j)<\infty\}>0$ for each $j$, then
 \begin{equation}\label{sup.G1}    \E \sup_{1\le j\le n}N(X_j)    \le K_3 
 \left(  \log n\right)^{1/ 2} 
 \sup_{1\le j\le n} \E N(X_j)     . 
 \end{equation}
 
 \end{remark}
  \vskip 3 pt
\begin{proof}[Proof of Theorem \ref{sup.kappa.th}]
  We deduce from Theorem \ref{bound1} and estimates   \eqref{G},  \eqref{sup.G},    
 \begin{eqnarray}\label{G1.sup.kappa} \Big\| \sup_{1\le u\le U }  \big|X_{y,x}(u)-X^\perp_{y,x}(u)\big|  \Big\|_G   &\le &  C   \,\D  \sqrt{ \log \k([1,U])}.\end{eqnarray}  Let  
 $$\tau = \Big\{   \ \sup_{1\le u\le U }  \big|X_{y,x}(u)-X^\perp_{y,x}(u)\big| \le  h
   \Big\}.$$ Further from \eqref{control1},
 \begin{eqnarray*}&& 
 \P\Big\{   \sup_{1\le u\le U} X_{y,x}(u)\le \Theta_{y,x} -h  \Big\}\cr &=&\P\Big\{ \big\{  \sup_{1\le u\le U} X_{y,x}(u)\le \Theta_{y,x}  -h\big\}\cap \tau\Big\} +\P\Big\{ \big\{  \sup_{1\le u\le U} X_{y,x}(u)\le \Theta_{y,x}  -h\big\}\cap \tau^c \Big\}
 \cr & \le &
\P\Big\{ \sup_{1\le u\le U} X^\perp_{y,x}(u) \le \Theta_{y,x}   \Big\} +
\P\Big\{     \sup_{1\le u\le U }  \big|X_{y,x}(u)-X^\perp_{y,x}(u)\big|    > h    
   \Big\}.
  \end{eqnarray*}
Therefore in view of  \eqref{G1.sup.kappa},   for some universal constant  $C\ge   K $, 
  \begin{eqnarray*} & &       \P\Big\{       \sup_{1\le u\le U }  \big|X_{y,x}(u)-X^\perp_{y,x}(u)\big|^2    > h^2   \Big\}
 \cr   &=  &  \P\Big\{   \exp\Big\{    \frac{\sup_{1\le u\le U }  \big|X_{y,x}(u)-X^\perp_{y,x}(u)\big|^2 }{C   \,\D^2    \log \k([1,U]) }   \Big\}   >  \exp\Big\{\frac{h^2 }{C   \,\D^2    \log \k([1,U]) } \Big\}   \Big\} \cr &\le  &  \P\Big\{   \exp\Big\{    \frac{\sup_{1\le u\le U }  \big|X_{y,x}(u)-X^\perp_{y,x}(u)\big|^2 }{K(\E \sup_{1\le u\le U }  \big|X_{y,x}(u)-X^\perp_{y,x}(u) )^2}   \Big\}   >  \exp\Big\{\frac{h^2 }{C   \,\D^2    \log \k([1,U])  } \Big\}   \Big\}  \cr &\le  & 2\, \exp\Big\{\frac{- h^2 }{C  \,\D^2    \log \k([1,U])} \Big\}     .
 \end{eqnarray*}
 Consequently, 
 \begin{equation*}  \P\Big\{   \sup_{1\le u\le U} X_{y,x}(u)\le \Theta_{y,x} -h  \Big\}   \,  \le \, 
\P\Big\{ \sup_{1\le u\le U} X^\perp_{y,x}(u) \le \Theta_{y,x}   \Big\} +2\, \exp\Big\{\frac{- h^2 }{C   \,\D^2    \log \k([1,U])} \Big\}.
  \end{equation*}
 \end{proof}
 

 \section{\bf Almost periodic Gaussian 
polynomials with linearly independent frequencies along lattices}
  \label{s6}
Consider   the family $X=X_x=\{ X_x(t),t\in \R\}$ of $x$-parametrized Gaussian Dirichlet generalized polynomials \begin{eqnarray*}
X_x(t) &=& \sum_{k\le x} a_k \big( g_k \cos\lambda_kt+ g'_k\sin \lambda_k t\big), \qq \qq t\in \R .
\end{eqnarray*}
Here $(\lambda_k)_{k\ge 1}$ is an increasing unbounded sequence of   irrational numbers, which is linearly independent over $\Q$.
\vskip 2 pt
No condition is imposed on the coefficient sequence $(a_k)_{ k\le x}$ except that it is non-vanishing.
\vskip 2 pt
 Consider the sub-process  
\begin{eqnarray}\label{px.almost.periodic.I.II}
   X(ja)=   X_x(ja ), \qq j\ge 0 ,\quad a>0.
\end{eqnarray}
  The decoupling coefficient of $X $ is the quantity 
\begin{eqnarray}\label{px.almost.periodic.n}
p(x,a) 
&=&\frac{1}{A(x)}\sum_{j\ge 0} \big|\sum_{k\le x} a_k^2 \cos  \l_k ja \big|
.
\end{eqnarray}
 \begin{theorem} \label{t3} For any reals $a>0$, $x>0$,
\begin{eqnarray}\label{px.almost.periodic.infty}
p(x,a) =\infty
.
\end{eqnarray}
\end{theorem}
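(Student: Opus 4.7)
My strategy is to prove the stronger Ces\`aro divergence
$$\liminf_{N\to\infty}\frac{1}{N}\sum_{j=0}^{N-1}|S_j|\ >\ 0, \qquad S_j := \sum_{k\le x}a_k^2\cos(\lambda_k ja),$$
which immediately gives $\sum_{j\ge 0}|S_j| = +\infty$, since absolute summability would force the Ces\`aro mean to vanish. The key idea is to realize the sequence $(|S_j|)_j$ as the values of a continuous function along a translation orbit in a compact abelian group, and then invoke unique ergodicity.

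Let $n$ denote the (finite) number of indices $k$ with $k\le x$ and put $\gamma = (a\lambda_1,\ldots,a\lambda_n)\bmod 2\pi \in \mathbb{T}^n$, where $\mathbb{T}^n = (\R/2\pi\Z)^n$. Let $H\subset \mathbb{T}^n$ be the closure of the cyclic subgroup $\langle\gamma\rangle$: this is a compact abelian Lie subgroup. The translation $T_\gamma\colon H\to H$, $u\mapsto u+\gamma$, is uniquely ergodic; indeed, a translation-invariant probability measure $\mu$ on $H$ must satisfy $\widehat{\mu}(\chi)(\chi(\gamma)-1)=0$ for every character $\chi$ of $H$, and the very definition of $H$ forces $\chi(\gamma)=1$ to imply $\chi\equiv 1$, so $\mu$ is the Haar measure $m_H$. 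Classical unique ergodicity then yields, for every $f\in C(\mathbb{T}^n)$,
$$\lim_{N\to\infty}\frac{1}{N}\sum_{j=0}^{N-1}f(j\gamma)\ =\ \int_H f\,dm_H.$$
I apply this with the continuous function $f(u_1,\ldots,u_n) = \big|\sum_{k\le x} a_k^2\cos u_k\big|$, for which $f(j\gamma) = |S_j|$.

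It remains to prove the limiting integral is strictly positive. The identity element $0$ belongs to $H$ and $f(0) = \sum_{k\le x} a_k^2 = A(x) > 0$. Continuity of $f$ supplies an open neighborhood $V\subset H$ of $0$ on which $f > A(x)/2$, and the Haar measure $m_H$ charges every non-empty open subset of $H$, hence $\int_H f\,dm_H \ge (A(x)/2)\,m_H(V) > 0$. Therefore $N^{-1}\sum_{j<N}|S_j|$ converges to a positive constant, $\sum_{j\ge 0}|S_j| = +\infty$, and $p(x,a)=+\infty$, as claimed.

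The only mildly technical step is the unique ergodicity input; if preferred it can be replaced by a direct invocation of Weyl's equidistribution theorem for the orbit of $\gamma$ in $\mathbb{T}^n$, or by a Fej\'er-kernel lower bound that exploits the non-trivial rational relations among the $(a\lambda_k)$. I note in passing that the $\Q$-linear independence of $(\lambda_k)$ plays no essential role here: the argument needs only $A(x)>0$, which is consistent with the theorem's hypothesis requiring no condition on the coefficient sequence beyond non-vanishing.
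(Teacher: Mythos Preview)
Your proof is correct and takes a genuinely different route from the paper's. The paper deduces Theorem~\ref{t3} from Theorem~\ref{ak.cos}, which in turn is proved via the lattice-localized Kronecker theorem (Theorem~\ref{t1}): one finds, in any long enough arithmetic progression, integers $\nu$ at which all the phases $\nu\lambda_k$ are nearly aligned, so that $|S_\nu|$ is close to $A(x)$; hence $\limsup_j |S_j|=A(x)>0$ and the series diverges by the trivial term test. Your argument instead shows that the Ces\`aro averages $N^{-1}\sum_{j<N}|S_j|$ converge to a strictly positive limit, by identifying them as Birkhoff averages of a continuous function along the orbit of a rotation on the compact subgroup $H=\overline{\langle\gamma\rangle}\subset\mathbb{T}^n$ and invoking unique ergodicity.

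The trade-offs are as follows. Your approach is more self-contained (it does not rely on the quantitative Kronecker theorem from \cite{W4}) and, as you correctly observe, more general: it needs only $A(x)>0$, not the $\Q$-linear independence of the $\lambda_k$, because the argument works on whatever closed subgroup $H$ the orbit generates. On the other hand, the paper's route yields strictly more information: it gives the exact value $\limsup_j|S_j|=A(x)$ (not merely positivity of the Ces\`aro mean), and Theorem~\ref{t1} provides quantitative control on how often $|S_j|$ is near this maximum, which is what is exploited in the subsequent correlation analysis of Section~\ref{s6}. For the bare statement of Theorem~\ref{t3}, your ergodic argument is the cleaner one.
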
  
\begin{theorem} \label{ak.cos}
   For any non-negative reals $\a_1, \ldots , \a_x$,  any arithmetic progression $\mathcal N$,   
  \begin{eqnarray}\limsup_{  \mathcal N\ni \nu\to\infty}  \big|\sum_{k\le x} \a_k  e^{2\pi \nu\l_k}    \big|&=& \sum_{k\le x}   \a_k
. 
\end{eqnarray}
\end{theorem}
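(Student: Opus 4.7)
The plan is to establish the equality by proving both inequalities separately, reading the exponent as $2\pi i \nu \lambda_k$ (the factor $i$ being missing in the displayed formula). The upper bound
\[
\Big|\sum_{k\le x} \alpha_k e^{2\pi i \nu \lambda_k}\Big| \le \sum_{k\le x}\alpha_k
\]
holds for every $\nu$ by the triangle inequality and the non-negativity of the $\alpha_k$'s, hence $\limsup \le \sum_k \alpha_k$ trivially. The whole content of the theorem is therefore the matching lower bound: I must produce arbitrarily large $\nu \in \mathcal{N}$ along which the phases $\{\nu\lambda_k\}_{1\le k\le x}$ are simultaneously close to $0 \pmod 1$, a classical simultaneous Diophantine approximation problem.

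Parametrize $\mathcal{N} = \{\nu_0 + jd : j \in \N\}$ with integer step $d \ge 1$. Factoring out the fixed phases gives
\[
\sum_{k\le x} \alpha_k e^{2\pi i (\nu_0 + jd)\lambda_k} \ = \ \sum_{k\le x} \beta_k e^{2\pi i j (d\lambda_k)}, \qquad \beta_k := \alpha_k e^{2\pi i \nu_0 \lambda_k}, \quad |\beta_k| = \alpha_k.
\]
Since $d$ is a positive integer, the $\Q$-linear independence of $1, \lambda_1, \ldots, \lambda_x$---the standing hypothesis on the frequency sequence---transfers to $1, d\lambda_1, \ldots, d\lambda_x$. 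By Kronecker's simultaneous approximation theorem, the orbit $\{(\{jd\lambda_1\}, \ldots, \{jd\lambda_x\}) : j \in \N\}$ is dense in the torus $[0,1)^x$. Applied to the target point $(\{-\nu_0 \lambda_1\}, \ldots, \{-\nu_0 \lambda_x\})$, this yields, for every $\epsilon > 0$, infinitely many $j \in \N$ such that
\[
\max_{1 \le k \le x}\, \| (\nu_0 + jd)\lambda_k \| < \epsilon.
\]

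For any such $j$, each factor $e^{2\pi i (\nu_0 + jd)\lambda_k}$ lies within distance $2\pi \epsilon$ of $1$, so
\[
\Big|\sum_{k\le x} \alpha_k e^{2\pi i (\nu_0 + jd)\lambda_k}\Big| \ \ge \ (1 - 2\pi\epsilon)\sum_{k\le x}\alpha_k.
\]
Since $\epsilon > 0$ is arbitrary and the corresponding $\nu = \nu_0 + jd$ may be chosen arbitrarily large in $\mathcal{N}$, one concludes $\limsup_{\mathcal{N}\ni \nu \to \infty}\big|\sum_k \alpha_k e^{2\pi i \nu \lambda_k}\big| \ge \sum_k \alpha_k$, matching the upper bound. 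The only point of care---and the main potential obstacle---is the precise interpretation of \emph{linearly independent over $\Q$}: Kronecker's theorem in its strong (density) form requires the tuple $1, \lambda_1, \ldots, \lambda_x$ to be $\Q$-independent, not merely the $\lambda_k$'s themselves, and it is crucial that scaling by the positive integer step $d$ preserves this property. Were the progression allowed an irrational step, an additional diophantine condition would have to be imposed.
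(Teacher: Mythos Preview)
Your proof is correct and takes a more elementary route than the paper's. The paper invokes its own quantitative lattice-restricted Kronecker theorem (Theorem~\ref{t1}): for any interval $I$ of sufficiently large length it produces $\nu \in I \cap h\N$ with $\max_k\|\nu\lambda_k - \beta\| \le 1/\omega$, and then lets $\omega \to \infty$. You instead appeal directly to the classical density form of Kronecker's theorem, absorbing the integer step $d$ into the frequencies and the offset $\nu_0$ into a target point on the torus. Your caveat about needing $1,\lambda_1,\ldots,\lambda_x$ (not merely the $\lambda_k$ among themselves) to be $\Q$-independent is well taken and applies equally to the paper's argument, since the applicability of Theorem~\ref{t1} rests on the positivity of the quantity $\Xi$ in \eqref{Xi}, which for integer $h$ encodes essentially the same requirement. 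What the paper's heavier tool buys is the localization of the approximating $\nu$ to prescribed intervals, and this is genuinely used downstream to place the points $j_u$ inside the blocks $I_u = [uT,(u+1)T]$ preceding Proposition~\ref{cov.cos.part.eta}; for the bare $\limsup$ statement of Theorem~\ref{ak.cos} your argument is sufficient and cleaner.
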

  These results were announced in \cite{W3}, Remark 4.4.
\subsection{Localized versions of Kronecker's theorem.}
 Recall first a classical result (Hardy and  Wright \cite{HW},\, th.\,443), if $\nu_1, \ldots, \nu_k, 1$ are linearly independent, then the set of points 
$$\big(\{n\nu_1\}, \ldots, \{n\nu_k\}\big) $$
is dense in the unit cube. 
  Kahane and   Salem showed in \cite{KS} pp.\,175-177, that if $\l_1,\ldots,\l_k$ are real numbers enjoying the following property: among the $3^k$ numbers, defined modulo $2\pi$,
$$ \e_1\l_1 +  \ldots+ \e_k\l_k, \qq \quad \e_j =-1,0, 1$$
each of the numbers $0, \l_1,    \ldots, \l_k$ (modulo $2 \pi)$ appears only once, then there exists, for any $\a>0$, a $T= T(\a,\l_1,    \ldots, \l_k)$ such that, for any $c_1,\ldots, c_k$, and $x$
\begin{equation} \sup_{x<u<x+T\atop u\,\hbox{\sevenrm integer}}\big| c_1e^{i \l_1 u}+\ldots + c_ke^{i \l_k u}\big|\,\ge \,\Big( \frac12 -\a\Big) \big( |c_1|+ \ldots+ |c_k|\big).\end{equation}
This is closer to our investigations.
       We use the following lattice localized version of Kronecker's theorem improving   the localized Kronecker Theorem established in   Weber \cite{W4}, and offering   an estimate from below of the number of restricted approximations, which is    new  in the study of Kronecker's theorem. 
 
 \begin{theorem} \label{t1}  There   exists a positive   constant $C_o\!<\!1/4$ such that for any positive integer 
$\o $,   and reals $\l_1,\l_2,\ldots,\l_N $,
any  positive real $h$ such that
   \begin{eqnarray}\label{Xi}    \Xi= \Xi( 
   h,\o,\l_1,
\l_2,
\ldots,
\l_N):=
     \min_{{\nu_\ell \,\hbox{\sevenrm integers}\atop 0<  \sup_{ \ell }  |\nu_\ell|\le  
     6 \o  \log {N\o\over C_o} 
   }}
 \Big\| h   \sum_{1\le \ell\le
 N  }   \l_\ell 
 \nu_\ell
 \Big\|\, >0,
\end{eqnarray}
 then for  any real $T>h$, $ T    >   {1 \over   \Xi}\, \big( { 4\,\o  \over   
      C_o} \sqrt{ \log \ {N\o\over C_o}   }\big)^{N} $,
the following approximation properties hold: 
\vskip 2 pt 
{\bf  (i)} For   any   interval $I$ of length $T$ and any  reals $ 
\b_1,\ldots, 
\b_N  $, 
\begin{equation}\label{(3.1)} \min_{t\in I\cap h\N}\ \max_{1 \le j\le N } \|  t\l_j-\b_j \|\le  {1\over \o}.  
\end{equation}

{\bf  (ii)} For $N$ and $\o$ large
 \begin{equation}\label{(3.9.)}
  \#\Big\{   t \in  I\cap h\N: \max_{1\le j\le N}\|  t\l_j-\b_j \|\le \frac{1}{\o}\Big\}\ge   \Big({C\over
\o\sqrt{
   k }}\Big)^{ N}  |I\cap h\N| ,\end{equation}
where 
\begin{equation}\label{enonce}  k=  \inf\Big \{j\ge 1:  {N\o\over
 C_0     }    \le  {4^{2j-1}\over \sqrt j}\Big\},  
\end{equation}
{\bf  (iii)} For some absolute constant $C$, 
\begin{equation*}\label{(3.9c)}
\#\Big\{   t \in  I\cap h\N: \max_{1\le j\le N}\|  t\l_j-\b_j \|\le {1/\o}\Big\}\ge    {C^{N/2} \over  h \Xi}   .\end{equation*} 
\end{theorem}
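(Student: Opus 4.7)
The strategy is to count
\[
\mathcal N := \#\bigl\{t \in I \cap h\N : \max_{1 \le j \le N}\|t\lambda_j - \beta_j\| \le 1/\omega\bigr\}
\]
by introducing a non-negative trigonometric detector on the torus whose Fourier support is calibrated to the degree $D := \lfloor 6\omega\log(N\omega/C_o)\rfloor$ that appears in the hypothesis \eqref{Xi}. A natural choice is a normalized power of the Fej\'er kernel, $K(\theta) = F_\omega(\theta)^k / \int F_\omega^k$, with $k$ taken as in \eqref{enonce}, so that $k(\omega-1) \le D$. Then $K \ge 0$, its Fourier support lies in $[-D,D]$ with non-negative coefficients, $\widehat K(0) = 1$, and $\sum_\nu \widehat K(\nu) = K(0)$ is the pointwise maximum. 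For $\|\theta\| \ge 1/\omega$, the value $K(\theta)/K(0)$ is bounded by $(\omega\|\theta\|)^{-2k}$, so $K$ is strongly concentrated on $\|\theta\| \le 1/\omega$.

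Form the $N$-fold product detector
\[
\Psi(t) \;=\; \prod_{j=1}^N K(t\lambda_j - \beta_j) \;=\; \sum_{\underline\nu \in [-D,D]^N} \Bigl(\prod_{j=1}^N \widehat K(\nu_j)\, e^{-2\pi i \nu_j \beta_j}\Bigr)\, e^{2\pi i t \sum_\ell \nu_\ell \lambda_\ell}
\]
and sum over $t \in I \cap h\N$. The zero frequency $\underline\nu = 0$ contributes exactly $\widehat K(0)^N |I \cap h\N| = |I \cap h\N|$. For each non-zero $\underline\nu \in [-D,D]^N$, the geometric sum $\sum_{t \in I \cap h\N} e^{2\pi i t\sum_\ell \nu_\ell \lambda_\ell}$ is bounded in modulus by $(2\|h\sum_\ell \nu_\ell \lambda_\ell\|)^{-1} \le 1/(2\Xi)$, by the diophantine hypothesis. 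Since the Fourier coefficients of $K$ are non-negative and total $K(0)$, the full error contribution is at most $K(0)^N/(2\Xi)$. The lower bound on $T$ in the statement is precisely what forces the main term to dominate, yielding
\[
\sum_{t \in I \cap h\N} \Psi(t) \;\ge\; \tfrac12 |I \cap h\N|.
\]

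The counting assertions now follow by comparing this lower bound with the sizes of $\Psi$ on good versus bad $t$. Since $\sup_t \Psi(t) = K(0)^N$, the displayed inequality already forces $\mathcal N \ge 1$ in (i), proving \eqref{(3.1)}. For (ii) and (iii), refine the estimate by noting that on bad $t$'s at least one factor $K(t\lambda_j-\beta_j) \le K(0)(\omega\|\cdot\|)^{-2k}$, which by the choice of $k$ via \eqref{enonce} shrinks the bad contribution to a negligible fraction of $|I \cap h\N|$. The remaining lower bound $\mathcal N \ge \tfrac14 |I \cap h\N|/K(0)^N$ combined with the Fej\'er-kernel asymptotics $K(0) = F_\omega(0)^k/\int F_\omega^k \asymp \omega\sqrt{k}$ yields the density $(C/\omega\sqrt{k})^N$ of \eqref{(3.9.)}. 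For (iii), observing $|I \cap h\N| \ge T/h$ and invoking the hypothesis $T > \Xi^{-1}(4\omega/C_o)^N (\log(N\omega/C_o))^{N/2}$ directly produces the $C^{N/2}/(h\Xi)$ count. The main obstacle lies in the delicate calibration of $k$: it must be large enough to make the tail of $F_\omega^k$ shrink like $O((N\omega)^{-1})$ on $\|\theta\| \ge 1/\omega$, yet small enough to keep the degree $k(\omega-1)$ below $D$. The implicit condition $N\omega/C_o \le 4^{2k-1}/\sqrt{k}$ in \eqref{enonce} is exactly the arithmetic point where these two demands balance, and its sharpness is what produces the factor $\sqrt{k}$ (rather than $k$) in \eqref{(3.9.)}.
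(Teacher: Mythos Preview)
The paper does not prove Theorem~\ref{t1}; it is quoted from the preprint \cite{W4} and then used as a black box in Section~\ref{s6}. There is therefore no in-paper argument to compare your sketch against, and it has to be judged on its own.

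Your Fej\'er-power detector strategy is the standard route to quantitative Kronecker theorems, and part~(i) does follow cleanly: once the main term $|I\cap h\N|$ dominates the Fourier error $K(0)^N/(2\Xi)$, the sum $\sum_t\Psi(t)$ is positive, and positivity of every factor forces some $t$ to be ``good''.

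The genuine gap is in your handling of~(ii). Your tail bound $K(\theta)/K(0)\le(\omega\|\theta\|)^{-2k}$ is essentially $\le 4^{-k}$ once $\|\theta\|>1/\omega$ (and is close to~$1$ when $\|\theta\|$ is only slightly above $1/\omega$, so even that needs the vanishing of $F_\omega$ at $1/\omega$ to be made precise). Taking the crude route you indicate, the bad contribution is at most $4^{-k}K(0)^{N}|I\cap h\N|$, to be compared with the main term $|I\cap h\N|$: this forces $4^{k}\gtrsim K(0)^{N}\asymp(\omega\sqrt k)^{N}$, i.e.\ $k\gtrsim N\log\omega$, whereas \eqref{enonce} only yields $k\asymp\log(N\omega)$. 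Even the natural refinement --- union-bound over which coordinate is bad, pull out one factor $\le 4^{-k}$, and re-apply the exponential-sum estimate to the remaining $(N\!-\!1)$-fold product --- reduces the requirement to $4^{k}\gtrsim N\omega\sqrt k$; but \eqref{enonce} gives $4^{2k-1}/\sqrt k\ge N\omega/C_o$, i.e.\ $4^{2k}\gtrsim N\omega\sqrt k$, which is strictly weaker. The exponent mismatch ($4^k$ versus $4^{2k}$) is the point you have not addressed. It can be repaired, for instance by taking the Fej\'er order to be about $2\omega$ rather than $\omega$ (so that beyond $1/\omega$ one gains $16^{-k}$, matching the $4^{2k-1}$ in \eqref{enonce}), or by running the argument with an exponent $k'$ near the degree ceiling $D/(\omega-1)\approx 6\log(N\omega/C_o)$ and then noting $k'\asymp k$; but your write-up asserts the cancellation without supplying any such calibration. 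Part~(iii) inherits the same defect.
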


\begin{remarks}\label{rem.t1} (1)  In Claim (i)
 the set of  $(\b_1,\ldots, 
\b_N)$ has power of continuum, whereas $I\cap h\N$ is finite.   This may look   confusing  (also in similar statements \ldots), but  hides a simple fact, which is easy to  clarify (proceed with suitable finite covering of $I$). 
 
\vskip 2 pt  \noi (2)  Let $0<\b<\pi/2$ be fixed.   We may choose    data $\o\ge 1$ integer so that $\b\o\ll 1$. By selecting $\b_j\equiv \b$ in Claim (i), we get
for any real $T>h$, $ T    >   {1 \over   \Xi}\, \big( { 4\,\o  \over   
      C_0} \sqrt{ \log \ {N\o\over C_0}   }\big)^{N}  
      $,  any   interval $I$ of length $T$\begin{equation*}  \min_{t\in I\cap h\N}\ \max_{1 \le j\le N } \|  t\l_j-\b  \|  \le \frac1\o. 
\end{equation*}
(See \cite{W4}, Remark\,1.2). 
\vskip 2 pt  \noi (3) Condition   $ \Xi>0$ is satisfied  if the reals $\l_1,\l_2,\ldots,\l_N $, are linearly independent.\end{remarks}
 
\subsection{Proofs of Theorems \ref{t3} and \ref{ak.cos}} 
  Let  $\|u\|$ denotes the distance from $u$  to the nearest integer, namely $\|u\|= \min_n|u-n|$.  
 We have for $1\le x\le y,\   s,t\in\R$, the correlation relations
\begin{eqnarray}\label{corr} \E X_x(s)X_y(t)&=& \sum_{k\le x} a_k^2 \cos\l_k(s-t),   \cr \E X_x(t)^2& =& A_x
.
\end{eqnarray}

   Let $\o$ be some positive integer.
    Let $h$ also be  a positive real, and $I$   an  interval of length $T>h$, where $T $ is as in Theorem   \ref{t1} with $N=x$, and so $T=T(x,\o,\Xi)$. 
Let $  
\b_1,\ldots, 
\b_x  $ be reals.  Then there exists   $\nu\in I\cap h\N$,   such that  
\begin{equation}\label{proof.t1a.}  \max_{1 \le k\le x } \|  \nu\l_k-\b_k\|\le {1\over \o}.  
\end{equation} 
As 
$$|e^{ i  \nu\l_k}-e^{ i  \b_k}|  =2  |\sin     \frac12 (  \nu\l_k-\b_k)| \le 2   \|  \nu\l_k-\b_k\|\le {2   \over \o},$$
we have 
\beq \label{approx.t1a}\max_{1 \le k\le x }\big|e^{2i  \nu\l_k}-e^{2i  \b_k}\big| \le 
{2   \over \o}.
\eeq
 
  
Let $0<\e<1$ be a fixed real, and  suppose   $\o$ chosen so  that $  \o > (2  /\e)$.  Let also $\b_k\equiv \b$. Then for any sequence  $\{a_k, 1\le k\le x\}$ of non-negative reals,
$$
 \Big|\big|\sum_{k\le x} a_k   e^{2i  \nu\l_k}\big|- \sum_{k\le x} a_k     \Big|=\Big|\big|\sum_{k\le x} a_k   e^{2i  \nu\l_k}\big|-\big|e^{2i  \b } \sum_{k\le x} a_k \big|    \Big|\le \e\,\sum_{k\le x}  a_k. 
$$

Consequently, for any interval $I$ of length $T$ there exists   $\nu\in I\cap h\N$, such that
$$
 \big|\sum_{k\le x} a_k   e^{2i  \nu\l_k}\big| \ge (1-\e)  \sum_{k\le x}  a_k . 
$$
 But $h$ and $\e$ are arbitrary. We therefore have proved  that 
given any reals $a_1, \ldots , a_x$, and any arithmetic progression $\mathcal N$, 
$$
\limsup_{  \mathcal N\ni \nu\to\infty}  \big|\sum_{k\le x} a_k  e^{2i  \nu\l_k}    \big|= \sum_{k\le x}  a_k, 
$$
 
Theorem \ref{t3}  now follows from   Theorem \ref{ak.cos}.

\subsection{\bf An additional correlation result.}  
Now let   $ I_u= [u T, (u+1)T]$,   $u=1,\ldots, m$, where  $T $ is as in Theorem   \ref{t1},  and let   $  j_u\in I_u\cap h\N$,  be such that \eqref{approx.t1a} holds with  $\nu=j_u$ for each $u$, namely we have  
\beq \label{approx.ju}\max_{1 \le u\le m }\max_{1 \le k\le x }\big|e^{2i  j_u\l_k}-e^{2i  \b_k}\big| \le 
{2   \over \o}.
\eeq 
 For $\b_k\equiv \b>0$ small, we have
$$   \sum_{k\le x} a_k^2   \cos^2\l_kj_u a \sim   \sum_{k\le x} a_k^2, \qq \quad u=1,\ldots, m. $$
  We   consider the Gaussian sequence $\{X_x^{\rm cos}(j_u a),1\le u\le m \}$, where $X_x^{\rm cos}(t)=\sum_{k\le x} a_k   g_k \cos\l_kt $.   
   \begin{proposition}\label{cov.cos.part.eta}
  Let   $X_x^{\rm cos}(t)=\sum_{k\le x} a_k   g_k \cos\l_kt $, $t\in\R$. For some positive real  $ \eta<1$, 
 \ben  \E\bigg( \frac{X_x^{\rm cos}(j_u a)}{\sqrt{\E X_x^{\rm cos}(j_u a)^2 }}\cdot \frac{X_x^{\rm cos} (j_v a) }{\sqrt{\E X_x^{\rm cos} (j_v a)^2 }}\bigg)
&  \le &   \eta  ,
  \een    
   for all $u,v=1,\ldots, m$, $u\neq v$. 
    \end{proposition}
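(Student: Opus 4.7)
The plan is to show that the normalized correlation $\rho_{uv}$ is strictly less than one for each fixed pair $u\ne v$, via Cauchy--Schwarz combined with the $\Q$-linear independence of $\{\lambda_k\}$, and then conclude by finiteness of the index set.

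First I would write out
$$\rho_{uv} \;=\; \frac{\sum_{k\le x} a_k^2\cos(\lambda_k j_u a)\cos(\lambda_k j_v a)}{\bigl(\sum_{k\le x} a_k^2\cos^2(\lambda_k j_u a)\bigr)^{1/2}\bigl(\sum_{k\le x} a_k^2\cos^2(\lambda_k j_v a)\bigr)^{1/2}},$$
and recognize it as the cosine of the angle between the vectors $U_u=(a_k\cos(\lambda_k j_u a))_{k\le x}$ and $U_v=(a_k\cos(\lambda_k j_v a))_{k\le x}$ in $\R^x$. Cauchy--Schwarz gives $|\rho_{uv}|\le 1$, with equality if and only if $U_u$ and $U_v$ are proportional. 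The Diophantine approximation \eqref{approx.ju}, together with the observation $\sum_k a_k^2\cos^2(\lambda_k j_u a)\sim A_x$ already recorded in the text for $\beta>0$ small, ensures that both denominators $\|U_u\|_2$ and $\|U_v\|_2$ are uniformly bounded away from zero in $u$, so the inequality $\rho_{uv}<1$ will follow as soon as proportionality of $U_u$ and $U_v$ is excluded.

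To rule out proportionality for $u\ne v$, suppose $\cos(\lambda_k j_u a)=c\cos(\lambda_k j_v a)$ for every $k\le x$, with some scalar $c\in\R$. In the case $c=\pm 1$ this forces, for each $k$, one of the linear relations $\lambda_k(j_u\mp j_v)a\in 2\pi\Z$ or $\lambda_k(j_u\pm j_v)a\in\pi(2\Z+1)$; since the intervals $I_u,I_v$ are disjoint one has $j_u-j_v\neq 0$, and any two distinct indices $k_1\ne k_2$ satisfying the same relation would force $\lambda_{k_1}/\lambda_{k_2}\in\Q$ (as $j_u\pm j_v\in\Z\setminus\{0\}$), contradicting the $\Q$-linear independence of $\{\lambda_k\}$. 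Hence $c=\pm 1$ is excluded.

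The main obstacle is the case $|c|\ne 1$: the relation $\cos A_k=c\cos B_k$ with $A_k=\lambda_k j_u a$, $B_k=\lambda_k j_v a$ imposed for all $k\le x$ is not immediately ruled out by a single $\Q$-linear relation among the $\lambda_k$, and one must instead argue that the $x$ points $\lambda_k$ cannot all lie on the transcendental one-parameter curve $\{\,t>0:\,\cos(j_u a t)=c\cos(j_v a t)\,\}$, whose intersection with any $\Q$-linearly independent family is governed by the Diophantine content of Theorem~\ref{t1}. Once this strict inequality is established, $\rho_{uv}<1$ holds for each of the $m(m-1)$ ordered pairs $u\ne v$, and setting $\eta=\max_{u\ne v}\rho_{uv}$ yields the required positive real $\eta<1$.
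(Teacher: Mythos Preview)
Your proposal has a genuine gap, and you name it yourself: the case $|c|\neq 1$ is not resolved. The closing appeal to Theorem~\ref{t1} is a hope rather than an argument---that theorem produces Kronecker approximants and says nothing about whether a $\Q$-linearly independent family of reals can all lie on the level set $\{t>0:\cos(j_u a t)=c\cos(j_v a t)\}$. More to the point, Cauchy--Schwarz is the wrong lever here. By the very construction \eqref{approx.ju}, every coordinate $\cos(\lambda_k j_u a)$ lies within $O(1/\omega)$ of the \emph{common} value $\cos\beta$, so all the vectors $U_u$ are close to the same vector $(\cos\beta)(a_k)_k$ and hence close to being mutually proportional; ruling out \emph{exact} proportionality through the arithmetic of the $\lambda_k$ gives no quantitative separation of $\rho_{uv}$ from $1$, and the $\eta=\max_{u\ne v}\rho_{uv}$ you produce carries no explicit dependence---in contrast with the $\eta=1-2/\omega$ the paper extracts and then feeds into Corollary~\ref{Ac1b}.

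The paper's argument is quantitative and never touches the equality case of Cauchy--Schwarz. It applies \eqref{approx.ju} to the numerator directly: writing
\[
\cos(\lambda_k j_u a)\cos(\lambda_k j_v a)-\cos^2\beta=(\cos(\lambda_k j_u a)-\cos\beta)\cos(\lambda_k j_v a)+(\cos(\lambda_k j_v a)-\cos\beta)\cos\beta
\]
and using $|\cos(\lambda_k j_w a)-\cos\beta|\le 2/\omega$ bounds the numerator by $(\cos^2\beta+4/\omega)A_x$, while each denominator factor is bounded below by $(1-2/\omega)A_x$. The resulting ratio $(\cos^2\beta+4/\omega)/(1-2/\omega)$ is then forced below $1-2/\omega$ by taking $\omega$ large relative to $1/\beta^2$ (the paper makes this explicit: $\omega>12/(c\beta^2)$ with $0<c<2/\pi$). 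The gap from $1$ is produced by $\cos^2\beta<1$, i.e.\ by the choice $\beta>0$; this is precisely the quantitative input your approach never uses.
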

 One can take $\eta= 1-{2   / \o}   
  $,  provided that     $ \frac{c}{2}  \b ^2<1$, and   $\o> {12\pi  / c(\pi \b)^2 } $,  $ c  $ being fixed in $]0,({2}/{\pi})[$).  
  See Remark \ref{Rem.Ac1a.}.   We don't know what can be done for the initial process  $X_x (t)  $.

 \begin{proof} We have the obvious correlation relations: 
 \begin{eqnarray}\label{corr.X.cos}  \E  X_x^{\rm cos}(s)X_x^{\rm cos}(t)&= &\sum_{k\le x} a_k^2 \cos\l_ks\cos\l_kt,  
  \cr \E X_x^{\rm cos}(t)^2& =& \sum_{k\le x} a_k^2   \cos^2\l_kt  
  \cr \E\bigg( \frac{X_x^{\rm cos}(s)}{\sqrt{\E X_x^{\rm cos}(s)^2 }}\cdot \frac{X_x^{\rm cos} (t) }{\sqrt{\E X_x^{\rm cos} (t)^2 }}\bigg)& =&  \frac{\sum_{k\le x} a_k^2 \cos\l_ks\cos\l_kt}{\sqrt{\displaystyle{\sum_{k\le x} a_k^2   \cos^2\l_ks    }}\sqrt{\displaystyle{ \sum_{k\le x} a_k^2   \cos^2\l_kt    }}}.
\end{eqnarray} As 
 \begin{align*} \cos ( \l_k  &j_u a)  \cos ( \l_k  j_v a) - \cos^2 (  \b_k)
\cr &=\big(\cos ( \l_k  j_u a)-\cos (  \b_k)\big)\cos ( \l_k  j_v a)+\big(\cos ( \l_k  j_v a)-\cos (  \b_k)\big) \cos (  \b_k),\end{align*}
using \eqref{proof.t1a.},
we get  
 \begin{eqnarray}\label{cos.part.bound}\big|\cos ( \l_k  j_u a) \cos ( \l_k  j_v a)- \cos^2 (  \b_k)\big|&\le &{2 \big( |\cos ( \l_k  j_v a)|+|\cos (  \b_k)|\big) \over \o}.
\end{eqnarray} 
 
 
Letting $\b_k\equiv \b $  we get the bound,
\begin{eqnarray}&&\big|\E  X_x^{\rm cos}(j_u a)X_x^{\rm cos}(j_v a)-\sum_{1\le k\le x}a_k^2 \cos^2 (  \b )\big|
\cr &=&\Big| \sum_{1\le k\le x}a_k^2\Big(\cos ( \l_k  j_u a) \cos ( \l_k  j_v a)- \cos^2 (  \b)\Big)\Big|
\cr &\le &{2  ( |\cos ( \l_k  j_v a)|+|\cos (  \b )|) \over \o}\Big( \sum_{1\le k\le x}a_k^2 \Big)
 .
\end{eqnarray}
Thus
\begin{equation}  \E  X_x^{\rm cos}(j_u a)X_x^{\rm cos}(j_v a)   \,\le \,\Big(\cos^2 (  \b)+{2 ( 1+|\cos (  \b )|) \over \o}\Big)\Big( \sum_{1\le k\le x}a_k^2 \Big)
 .
\end{equation}
 
But $\ |e^{  \l_kj_u a}-e^{    \b } |=  |e^{  (\l_kj_u a-\b)}-1 |\ge |\sin (\l_kj_u a-\b)|$, so that
 \beq\label{est.square}\sum_{k\le x} a_k^2   \cos^2\l_kj_u a=\sum_{k\le x} a_k^2- \sum_{k\le x} a_k^2    \sin^2\l_kj_u a \ge \big(1-{2   / \o} \big) \sum_{k\le x} a_k^2. 
 \eeq
  Similarly,
  $\sum_{k\le x} a_k^2   \cos^2\l_kj_v a
 =\sum_{k\le x} a_k^2- \sum_{k\le x} a_k^2    \sin^2\l_kj_v a
  \ge \big(1-{2   / \o} \big) \sum_{k\le x} a_k^2 
   $. 
Therefore
  \ben \label{bound.cov.cos} \E\bigg( \frac{X_x^{\rm cos}(j_u a)}{\sqrt{\E X_x^{\rm cos}(j_u a)^2 }}\cdot \frac{X_x^{\rm cos} (j_v a) }{\sqrt{\E X_x^{\rm cos} (j_v a)^2 }}\bigg)
&  \le & \frac{(\cos^2 (  \b) +
     {4 ( 1+|\cos (  \b )|) \over \o})\big(\sum_{1\le k\le x}a_k^2\big)}{\big(1-{2   / \o} \big) \sum_{k\le x} a_k^2 }
 \cr &\le &  \frac{(\cos^2 (  \b) +{4   / \o})\big(\sum_{1\le k\le x}a_k^2\big)}{\big(1-{2   / \o} \big) \big(\sum_{1\le k\le x}a_k^2\big) }
  \cr &=& \frac{(\cos^2 (  \b) +{4   / \o}) }{\big(1-{2   / \o} \big)  }.
  \een

  Recall that $\sin x \ge ({2}/{\pi})x $  if $0\le x\le ({\pi}/{2})$.  Let $f(x)= \cos x -1+cx^2/2$, where $0<c<({2}/{\pi})$. Then $f'(x) =-\sin  x +cx \le ( c- ({2}/{\pi}) )x\le 0$, if $0\le x \le  ({\pi}/{2})$. Thus $$ \cos x -1+cx^2/2\le 0, \qq\qq 0\le x \le  ({\pi}/{2}).$$
  
   It follows that for $0\le \b \le  ({\pi}/{2})$,
 $ \cos   \b  \le \big(1 -c \big (\frac{   \b ^2}{2}\big)\big)$.   
Hence
$$ \cos^2  \b  \le \Big(1 -c \big (\frac{  \b ^2}{2}\big)\Big)^2 =1 +   \frac{c^2  \b ^4}{4}-   c \b ^2 \le 1      -   \frac{c}{2}  \b ^2 ,  $$
if $\frac{c^2  \b ^4}{4}\le \frac{c}{2} \b ^2$, namely 
 $ \frac{c}{2} \b ^2<1$,  which holds whenever $\b $ is small enough. Thus$$ \cos^2   \b  +{4   / \o}\le 1      -   \frac{c}{2}  \b ^2 +{4  / \o} <  1-{2   / \o} ,   $$
   if $ {6  / \o} <\frac{c}{2}   \b ^2$.  We were free to select in advance $\o$. It suffices that $\o> {12   / c \b ^2 } $.
 Therefore for some positive real  $ \eta<1$
\beq\label{beta.omega}\frac{(\cos^2   \b  +{4   / \o}) }{\big(1-{2   / \o} \big)  }<  1-{2   / \o}\,\le \,\eta,
\eeq
and so by \eqref{bound.cov.cos},  for $u,v=1,\ldots, m$, $u\neq v$,
 \ben  \E\bigg( \frac{X_x^{\rm cos}(j_u a)}{\sqrt{\E X_x^{\rm cos}(j_u a)^2 }}\cdot \frac{X_x^{\rm cos} (j_v a) }{\sqrt{\E X_x^{\rm cos} (j_v a)^2 }}\bigg)
&  \le &   \eta  .
  \een 
 
 \end{proof}
  
\begin{remarks}\label{Rem.Ac1a.}
(i) By recapitulating:   $ c  $ is fixed in $]0,({2}/{\pi})[$, next $\b$ is chosen   so that $ \frac{c}{2}  \b ^2<1$, and after $\o$ so that $\o> {12\pi  / c(\pi \b)^2 } $. Then \eqref{beta.omega} holds. Thus  
  $\eta= 1-{2   / \o}   
  $
is suitable. 
\vskip 2 pt (ii) Further by \eqref{est.square} and \eqref{beta.omega},
  \beq\label{est.square.a} \E\, X _x^{\rm cos}(j_u a)^2 = \sum_{k\le x} a_k^2   \cos^2\l_kj_u a  \ge \big(1-{2   / \o} \big) \sum_{k\le x} a_k^2  = \eta \sum_{k\le x} a_k^2 . 
 \eeq
 \end{remarks}

 \vskip 4 pt 
Let      $\eta$ be as   in Remark \ref{Rem.Ac1a.}-(i), let also  $Y=\{Y_u,1\le u\le m\}$ be the Gaussian vector having covariance function defined by
\begin{equation*}\begin{cases} \E Y^2_u=1 &\quad u=1,\ldots, m\cr 
\E Y_uY_v=\eta &\quad  u,v=1,\ldots,m, u\neq v .
\end{cases}\end{equation*}
 
  Put 
$$\widetilde{ X}_x (t)= \frac{X_x^{\rm cos}(t)}{\sqrt{\E X_x^{\rm cos}(t)^2 }}\qq \quad t\in\R.$$

 By Proposition \ref{cov.cos.part.eta}, 
\begin{eqnarray*} \E \widetilde{ X}_x^2 (   j_u a) &=&\E Y^2_u \qquad \ \ \, u=1,\ldots, m,\cr \E \widetilde{ X}_x(    j_u a)\widetilde{ X}_x(    j_v a)&\le &\E Y_uY_v \qquad  u,v=1,\ldots,m, u\neq v .\end{eqnarray*} 
 It follows from Theorem \ref{Ac1c} that  for any positive real $h$,
 \begin{eqnarray}\label{bound.max.cos.part}  \P\Big\{\sup_{u=1}^m\widetilde{ X}_x(\pi   j_u a) <h\Big\}&\le  &\P\Big\{\sup_{u=1}^mY_u <h\Big\} .
 \end{eqnarray} 
 
 As a direct consequence of   Remark \ref{Rem.Ac1a.}-(ii) 
we have  for any $\kappa>0$,
   \beq\label{Ac1b}
   \P\Big\{
   \sup_{u=1}^m  X^{{\rm cos}}_x(\pi   j_u a)
    < \eta
     \big(\sum_{k\le x} a_k^2 \big)^{1/2}\kappa\Big\} 
     \,\le \,\frac{1 }{  (1-\eta)^{(m-1)/2} }\ \Phi \Big(
     \frac{\kappa}{\sqrt{ 1+\eta(m-1) }}\Big)^m 
      .
\eeq


\end{document}